\documentclass{article}
\usepackage{amssymb}
\usepackage{amsfonts}
\usepackage{amsmath}
\usepackage{dsfont}
\usepackage{setspace}
\usepackage{hyperref}
\usepackage{graphicx}

\setcounter{MaxMatrixCols}{10}

\newtheorem{theorem}{Theorem}[section]

\newtheorem{definition}{Definition}[section]
\newtheorem{lemma}{Lemma}[section]
\newtheorem{proposition}[theorem]{Proposition}
\newtheorem{remark}{Remark}[section]
\newenvironment{proof}[1][Proof]{\noindent\textbf{#1.} }{\ \rule{0.5em}{0.5em}}
\providecommand{\keywords}[1]{\textit{Keywords and phrases - } #1}
\providecommand{\AMS}[1]{\textit{AMS 2010 Subject Classification - } #1}
\input{tcilatex}
\onehalfspace

\begin{document}

\title{Quantitative central limit theorems for Mexican needlet coefficients on circular Poisson fields}
\author{Claudio Durastanti\thanks{%
e-mail address: durastan@mat.uniroma2.it} \thanks{%
This research is supported by European Research Council Grant n.277742 PASCAL%
} \\
University of Tor Vergata, Rome}
\maketitle

\begin{abstract}
The aim of this paper is to establish rates of convergence to Gaussianity for wavelet coefficients on circular Poisson random fields. This result is established by
using the Stein-Malliavin techniques introduced by Peccati and Zheng (2011)
and the concentration properties of so-called Mexican needlets on the circle.
\end{abstract}
\AMS{60F05, 60G60, 62E20, 62G20\\}
\keywords{Malliavin Calculus; Stein’s Method; Multidimensional Normal Approximations;
Poisson Process; Circular Wavelets, Circular and Directional data, Mexican Needlets, Nearly-Tight frames.}

\section{Introduction}

\subsection{Motivations}

This work is concerned with the study of quantitative central limit theorems
for linear statistics based on wavelet coefficients computed on circular
Poisson random fields. In particular, we are referring to the very
remarkable advances provided in this area by the combination of two
probabilistic methods, the Malliavin calculus of variations and the Stein's
method of approximations. The interaction of these methods is successfully
applied to exploit rates of convergence of the asymptotic normal
approximation for functionals of Gaussian random measures (see \cite{np, nup}%
), for functionals of general Poisson random measures (cfr. \cite{peccati1,
peccati2} and \cite{bp}) and, more recently, to fix convergence criteria
from the point of view of spectral theory of general Markov diffusion
generators (see \cite{campese, ledoux}). These results have being used in
a growing range of applications, see for instance \cite{hwz, lwx, yhwl}: we
mention as textbook reference \cite{npbook} while the webpage
http://www.iecn.u-nancy.fr/\hbox{$\scriptstyle\mathtt{\sim}$}%
nourdin/steinmalliavin.htm contains updates on the researches on this field.

The main object of our investigation concerns the application of these
techniques to the framework of wavelet coefficients computed over samples of
circular data. These data correspond to the measures of angles labelled by a
given origin, i. e. a starting point on $\mathbb{S}^{1}$, and a given
positive direction. From the theoretical point of view, the datasets on $%
\mathbb{S}^{1}$ are characterized by a periodicity property, with period $%
2\pi $. As consequence, a lot of interest has been recently raised by the
development of statistical methods on the circle, also in view of their
applications in many different sciences, as for instance geophysics,
cosmology, oceanography and engineering. A complete overview on this topic
can be found, for instance, in the textbooks \cite{bhatta, fisher,
rao,silverman}. Some more recent applications can be found in \cite{alial,
dimarzio, durastanti3, klemklem, wu}. In recent years (cfr. \cite{ant1}%
), the literature concerning the unit $q$-dimensional sphere has made an
extensive use of the construction of second-generation wavelets on the
sphere, the so-called spherical needlets. Introduced in the literature by 
\cite{npw1, npw2}, spherical needlets are characterized by some main
properties which makes them an excellent tool for statistical analysis, such
as their concentration in both Fourier and space domains. Spherical
needlets and some extensions were successfully applied to a large set of
statistical problems, see for instance \cite{bkmpAoS, bkmpAoSb, cm, dlm}.

Some assessments of quantitative Berry-Esseen bounds for statistics related
to the needlet framework are already present in the literature: the
multidimensional normal approximation of linear and nonlinear statistics
based on needlet coefficients evaluated either on Poisson field or on
vectors of i.i.d. observations over the sphere were studied, respectively,
in \cite{dmp} and in \cite{bdmp}. The wavelets taken into account here are
instead the so-called Mexican needlets, built over a general compact
manifold by D. Geller and A. Mayeli in \cite{gm0, gm1, gm2, gm3}. A Mexican
needlet $\psi _{jq;s}$ is indexed by the shape parameter $s$, the resolution
level $j$ and by $q$, which indicates the region $E_{jq}\subset \mathbb{S}%
^{1}$ on which the needlet is consistently different from $0$. It can be
roughly thought as a product between basis elements computed on a suitable
set of points $x_{jq}\in E_{jq}$ and the Schwarz function $w_{s}:\mathbb{R}%
\mapsto \mathbb{R}_{+}$. The function $w_{s}$ leads to an exponential
concentration property in the spatial domain, stronger than the localization
related to standard needlets\ (cfr. also \cite{durastanti1, durastanti3}).
Moreover, while the standard needlets are defined over a set of exact
cubature points and weights (cfr. \cite{npw1}) to have a tight frame, the
Theorem 2.2 in \cite{gm2} establishes that the frame obtained by the Mexican
needlets built over a set of points under some weaker conditions (see \cite%
{gm2} and Section \ref{subharmonic} below) is nearly-tight. Various examples
of statistical applications of Mexican needlets can be found, for instance,
in \cite{scodeller, lanmar2, mayeli, durastanti3, dll}.

\subsection{Main results}

This work is concerned with quantitative rates of
convergence to Gaussianity of Mexican needlet coefficients
sampled over Poisson processes. Consider a sequence of independent and
identically distributed random variables $\left\{ X_{i},i\geq 1\right\} $,
taking values over $\mathbb{S}^{1}$ so that $b_{jq;s}:=\mathbb{E}\left[ \psi
_{jq;s}\left( X_{1}\right) \right] $ and $\sigma _{jq;s}:=\mathbb{E}\left[
\psi _{jq;s}^{2}\left( X_{1}\right) \right] $. Let us consider the
independent Poisson process $\left\{ N_{t}:t\geq 0\right\} $ on $\mathbb{R}$
with parameter $R_{t}$, which is monotonically increasing with $t$. Our
purpose is to establish conditions over the three sequences $\left\{
j=j_{t}:t\geq 1\right\} $, $\left\{ q=q_{t}:t\geq 1\right\} $ and $\left\{
R_{t}:t\geq 1\right\} $ so that, in the sense of the distance $d_{2}$, the $%
d_{t}$-dimensional vector $Y_{t}=\left( Y_{t,1},...,Y_{t,d_{t}}\right) $,
where%
\begin{equation}
Y_{t,i}:=\frac{1}{\sqrt{R_{t}}}\sum_{i=1}^{R_{t}}\left( \frac{\psi
_{j_{t}q_{t};s}\left( X_{i}\right) -b_{j_{t}q_{t};s}}{\sigma _{j_{t}q_{t};s}}%
\right) \text{ ,}  \label{Ycomponent}
\end{equation}%
is asymptotically close to a Gaussian $d_{t}$-dimensional standard Gaussian
random vector $Z_{d_{t}}$. These results are stated in Theorems \ref%
{theorempeccatiunivariate} and \ref{theorempeccati}. Similar results were
obtained on $\mathbb{S}^{2}$ by using standard needlets, cfr. \cite{dmp}.
Furthermore, we study also the so-called 'de-Poissonized' case, where the
data are i.i.d. over $\mathbb{S}^{1}$ and for which we will establish a
quantitative central limit theorem, cfr. Proposition \ref{depoisson}. We
will also propose a case study concerning the nonparametric density
estimation, which can be considered as a completion of our previous work 
\cite{durastanti3}.

From the technical point of view, the proofs of the main theorems follow
strictly the guidelines driven for this kind of application by \cite{dmp}.
On the other hand, the ancillary results concerning the asymptotic behaviour
of the covariance matrix of Mexican needlet coefficients sampled on Poisson
random processes are also of interest. They are obtained by using the
localization property of Mexican circular needlets developed in \cite%
{durastanti3}.

\subsection{Plan of the paper}

The Section \ref{subharmonic} introduces some preliminary notions, such as
Mexican needlets systems and their properties and the general results on
Stein-Malliavin bounds from \cite{peccati1, peccati2}. The Section \ref%
{secmain} presents the statement of our main results on the rate of
convergence in the Gaussian approximation of linear statistics of Mexican
needlet coefficients by Stein-Malliavin techniques. The Section \ref%
{secpeccati} is concerned with the proofs of the main theorems and of the
auxiliary results. The Section \ref{secapplication} studies an application
to the framework of the nonparametric density estimation of the results in
the Theorem \ref{theorempeccati}. The Section \ref{secsimulation} contains
some numerical evidence.

\section{Preliminary results\label{subharmonic}}

\subsection{The Mexican needlet framework}

In this section, we will introduce the construction of the Mexican needlets
over the unit circle $\mathbb{S}^{1}$: these wavelets were introduced
in the literature by D. Geller and A. Mayeli in \cite{gm0, gm1, gm2, gm3}.
We will start with a quick overview on the Fourier analysis over the circle:
let $L^{2}\left( \mathbb{S}^{1}\right) \equiv L^{2}\left( \mathbb{S}%
^{1},d\rho \right) $ be the space of square integrable functions over the
circle with respect to the uniform Lebesgue measure $\rho \left( d\theta
\right) :=\left( 2\pi \right) ^{-1}d\theta $. As well-known in the
literature, the set of functions $\left\{ u_{k}\left( \theta \right) ,\theta
\in \mathbb{S}^{1},k\in \mathbb{Z}\right\} $, $u_{k}\left( x\right) =\exp
\left( ik\theta \right) $, is an orthonormal basis over $\mathbb{S}^{1}$.
For $f\in L^{2}\left( \mathbb{S}^{1}\right) $, we define the Fourier
transform as%
\begin{equation*}
a_{k}:=\frac{1}{2\pi }\int_{0}^{2\pi }f\left( \theta \right) \overline{%
u_{k}\left( \theta \right) }d\theta \text{ ,}
\end{equation*}%
while the Fourier expansion is given by%
\begin{equation}
f\left( \theta \right) =\sum_{k\in \mathbb{Z}}a_{k}u_{k}\left( \theta
\right) \text{ , }\theta \in \mathbb{S}^{1}\text{ .}
\label{fourier expansion}
\end{equation}%
Observe that $u_{k}\ $is the eigenfunction of the circular Laplacian $\Delta 
$ corresponding to eigenvalue $-k^{2}$, further details can be found in the
textbook\cite{steinweiss}, see also \cite{marpecbook}.

\begin{figure}[tbp]
\centering
\includegraphics[width=\textwidth]{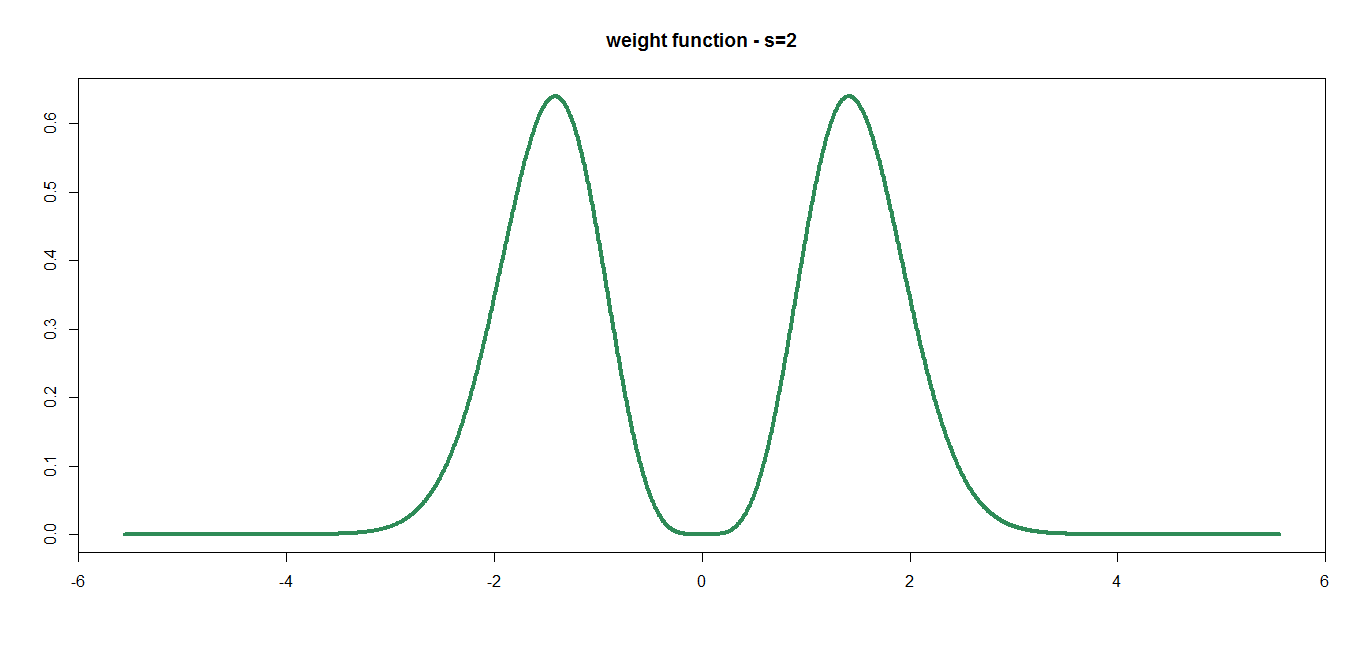}
\caption{the weight function $w_{s}$ for $s=2$.}
\label{fig:1}
\end{figure}

Consider now the function $w_{s}:\mathbb{R}\rightarrow \mathbb{R}_{+}
$, named weight function (cfr. Figure \ref{fig:1}) and defined as%
\begin{equation}
w_{s}\left( x\right) :=x^{s}\exp \left( -x\right) \text{ , }x\in \mathbb{R}%
\text{ .}  \label{weight}
\end{equation}%
Following \cite{gm0}, from the Calderon formula and for $t\in \mathbb{R}_{+}$%
, we define%
\begin{equation*}
e_{s}:=\int_{0}^{\infty }\left\vert w_{s}\left( tx\right) \right\vert ^{2}%
\frac{dx}{x}=\frac{\Gamma \left( 2s\right) }{2^{2s}}\text{ ;}
\end{equation*}%
on the other hand, (see \cite{gm2}) fixing the scale parameter $B>1$, from
the Daubechies' Condition it follows that%
\begin{equation*}
\Lambda _{B,s}m_{B}\leq \sum_{j=-\infty }^{\infty }\left\vert w_{s}\left(
tB^{-2j}\right) \right\vert ^{2}\leq \Lambda _{B,s}M_{B}\text{ ,}
\end{equation*}%
where $\Lambda _{B,s}=e_{s}\left( 2\log B\right) ^{-1}$, $M_{B}=\left(
1+O_{B}\left( \left\vert B-1\right\vert ^{2}\log \left\vert B-1\right\vert
\right) \right) $ and $m_{B}=\left( 1-O_{B}\left( \left\vert B-1\right\vert
^{2}\log \left\vert B-1\right\vert \right) \right) $.

Fixed the resolution level $j\in \mathbb{Z}$, consider a partition of $%
\mathbb{S}^{1}$ $\left\{ E_{jq}:q=1,...,Q_{j}\right\} $ such that for any $%
q_{1}\neq q_{2}$, $E_{jq_{1}}\cap E_{jq_{2}}=\varnothing $. The region $%
E_{jk}$ can be described in terms of the couple $\left( \lambda
_{jq},x_{jq}\right) $: the positive constant $\lambda _{jq}:=\rho \left(
E_{jq}\right) $ is the length of $E_{jq}$, and $x_{jq}\in E_{jq}$ is a
generic point belonging to $E_{jq}$. For the sake of simplicity, from now on
we will consider $x_{jq}$ as the midpoint of the segment of arc $E_{jq}$.
Fixing the shape and the scale parameters $s\in \mathbb{N}$ and $B>1$ (cfr.
Figure \ref{fig:2}), the circular Mexican needlet $\psi _{jq;s}:\mathbb{S}%
^{1}\mapsto \mathbb{C}$ is defined as%
\begin{eqnarray}
\psi _{jq;s}\left( \theta \right)  &:&=\sqrt{\lambda _{jq}}\sum_{k=-\infty
}^{\infty }w_{s}\left( \left( B^{-j}k\right) ^{2}\right) \overline{%
u_{k}\left( x_{jq}\right) }u_{k}\left( \theta \right)   \notag \\
&=&\sqrt{\lambda _{jq}}\sum_{k=-\infty }^{\infty }w_{s}\left( \left(
B^{-j}k\right) ^{2}\right) \exp \left( ik\left( \theta -x_{ju}\right)
\right) \text{, }\theta \in \mathbb{S}^{1}\text{ .}  \label{needletdef}
\end{eqnarray}

\begin{figure}[tbp]
\centering
\includegraphics[width=\textwidth]{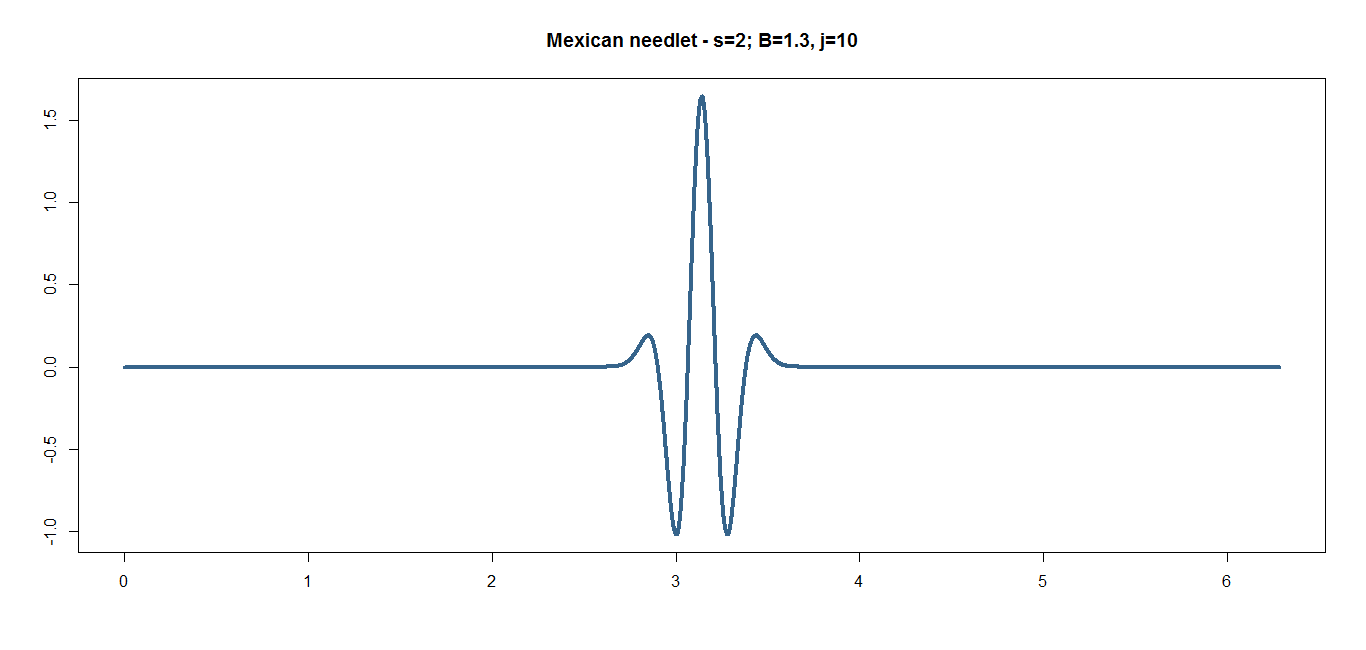}
\caption{the Mexican needlet corresponding to $s=2$, $j=10$ and $B=1.3$.}
\label{fig:2}
\end{figure}

For any $F\in $ $L^{2}\left( \mathbb{S}^{1}\right) $, the needlet
coefficient $\beta _{jq;s}\in \mathbb{C}$ associated to $\psi _{jq;s}$ is
given by%
\begin{equation}
\beta _{jq;s}:=\left\langle F,\psi _{jq;s}\right\rangle _{L^{2}\left( 
\mathbb{S}^{1}\right) }\text{ .}  \label{needletcoeff}
\end{equation}

The system $\left\{ \psi _{jq;s}\right\} $, under some regularity
conditions, describes a nearly-tight frame over $\mathbb{S}^{1}$, as proved
by the Theorem 1.1 in \cite{gm2} (for general manifold). A set of functions $%
\left\{ g_{i},i\geq 1\right\} $ over a manifold $M$ is said a frame if there
exist two positive constant $c_{1}$ and $c_{2}$ (the tightness constants)
such that for any $F\in L^{2}\left( M\right) $ 
\begin{equation*}
c_{1}\left\Vert F\right\Vert _{L^{2}\left( M\right) }^{2}\leq
\sum_{i}\left\vert \left\langle F,x_{i}\right\rangle _{L^{2}\left( M\right)
}\right\vert ^{2}\leq c_{2}\left\Vert F\right\Vert _{L^{2}\left( M\right)
}^{2}\text{ .}
\end{equation*}%
The frame is tight if $c=c_{1}=c_{2}$. If the frame is tight, it is characterized by a \textit{reconstruction formula}, i.e. for any%
 $F\in L^{2}\left( M\right) $, the following equality holds in the $L^2$-sense:

\begin{eqnarray*}
F = \frac{1}{\sqrt{c}}\sum_{i} \left\langle F,x_{i}\right\rangle _{L^{2}} x_{i} \text{ ,}
\end{eqnarray*}
which can be roughly viewed as the counterpart of the harmonic expansion in the wavelet framework. 
 As example, we recall that the standard %
spherical needlets describe a tight frame over the $d$-dimensional sphere $%
\mathbb{S}^{d}$, cfr. \cite{npw1, npw2}; a frame is nearly-tight if $%
c_{2}/c_{1}\simeq 1+\varepsilon $, $\varepsilon $ sufficiently close to $0$.
In this case, a reconstruction formula does not hold anymore, but it is possible to build a \textit{summation formula}, such that 
\begin{eqnarray*}
F = \frac{1}{\sqrt{c}}\sum_{i} \left\langle F,x_{i}\right\rangle _{L^{2}} x_{i} + B_\epsilon \text{ .}
\end{eqnarray*}
The bias $B_\epsilon$ is smaller as $\epsilon$ is closer to $0$, cfr. \cite{durastanti3}.
Following Theorem 1.1 in \cite{gm2}, we have that, fixing $%
B>1$ and $c_{0},\delta _{0}>0$ sufficiently small, there exists a constant $%
C_{0}$ as follows: \textit{(i)} for the pixel parameter $\eta \in \left(
0,1\right) $ and for $j\in \mathbb{Z}$, there exists a set of measurable
sets $\left\{ E_{jq},q=1,...,Q_{j}\right\} $, with $\lambda _{jq}\leq \eta
B^{-j}$ and for each $j$ with $\eta B^{-j}<\delta _{0}$, $\lambda _{jq}\geq
c_{0}\left( \eta B^{-j}\right) $ for $q=1,...,Q_{j}$; \textit{(ii)} it holds
that 
\begin{equation}
\left( \Lambda _{B,s}m_{B}-C_{0}\eta \right) \left\Vert F\right\Vert
_{L^{2}\left( \mathbb{S}^{1}\right) }^{2}\leq \sum_{j=-\infty }^{\infty
}\sum_{q=1}^{Q_{j}}\left\vert \beta _{jq;s}\right\vert ^{2}\leq \left(
\Lambda _{B,s}M_{B}+C_{0}\eta \right) \left\Vert F\right\Vert _{L^{2}\left( 
\mathbb{S}^{1}\right) }^{2}.  \label{tightness}
\end{equation}%
If $\left( \Lambda _{B,s}m_{B}-C_{0}\eta \right) >0$, it follows $\left\{
\psi _{jq;s}\right\} $ is a nearly tight frame, since 
\begin{equation*}
\frac{\left( \Lambda _{B,s}M_{B}+C_{0}\eta \right) }{\left( \Lambda
_{B,s}m_{B}-C_{0}\eta \right) }\sim \frac{M_{B}}{m_{B}}=1+O_{B}\left(
\left\vert B-1\right\vert ^{2}\log \left\vert B-1\right\vert \right) \text{ .%
}
\end{equation*}

\begin{remark}
Mexican needlets present some remarkable advantages if compared to the
standard needlet systems, see for instance \cite{npw1, npw2, bkmpAoSb} and
the textbook \cite{marpecbook}: first of all, they feature a stronger
concentration property in the real domain, cfr. \cite{durastanti1,
durastanti3, gm2}. Then, they do not need an exact system of cubature points
and weights but they can be built over a more general partition given by $%
\left\{ E_{jq},q=1,...,Q_{j}\right\} $, cfr. \cite{gm2}. On the other hand,
they present also some disadvantages: spherical needlets are
characterized by a compact support in the frequency domain (cfr. \cite{npw1,
npw2}), while Mexican needlets are defined over the whole frequency
range. Furthermore, as already mentioned, standard needlets describe a tight
frame and therefore they enjoy an exact reconstruction formula, which is
lacking in the Mexican needlet framework. The former issue is ``empirically''
compensated by the form of the function $w_{s}$, strongly localized
around a dominant term in the frequency domain and very close to zero out of
a very limited set of frequencies, substantially equivalent to the compact
support of the standard needlets. As far as the latter issue is concerned,
the summation formula, the counterpart of the reconstruction formula,
is characterized by a bias which is easily controlled by the user given the
nearly-tightness of the frame (cfr. \cite{durastanti3}).
\end{remark}

From now on, we will consider just positive resolution levels $j$. In order
to respect the conditions of nearly-tightness, we impose that, for $j>0,$%
\begin{equation}
Q_{j}\approx \eta ^{-1}B^{j}\text{ , }\lambda _{jq}\approx \eta B^{-j}\text{
.}  \label{areaandcardinality}
\end{equation}%
The Mexican needlets localization property can be stated as follows: for any 
$s\in \mathbb{N}$, there exists $c_{s}$ such that%
\begin{equation*}
\left\vert \psi _{jk;s}\left( \theta \right) \right\vert \leq c_{s}B^{\frac{j%
}{2}}\exp \left( -\left( \frac{B^{j}\left( \theta -x_{jk}\right) }{2}\right)
^{2}\right) \left( 1+\left( \frac{B^{j}\left( \theta -x_{jk}\right) }{2}%
\right) ^{2s}\right) \text{ ,}
\end{equation*}%
cfr. \cite{durastanti1, durastanti3, gm2}. The localization property leads
to very relevant boundedness rules on the $L^{p}$-norms: there exist $%
\widetilde{c_{p}},\widetilde{C_{p}}>0$ such that 
\begin{equation}
\widetilde{c_{p}}B^{j\left( \frac{p}{2}-1\right) }\eta ^{\frac{p}{2}}\leq
\left\Vert \psi _{jq;s}\right\Vert _{L^{p}\left( \mathbb{S}^{1}\right)
}^{p}\leq \widetilde{C_{p}}\eta ^{\frac{p}{2}}B^{j\left( \frac{p}{2}%
-1\right) }\text{ ,}  \label{normbound}
\end{equation}%
cfr. \cite{durastanti1,durastanti3}.

\subsection{Normal approximations and Stein-Malliavin bounds\label%
{secsteinmall}}

This section provides a quick overview on the asymptotic Gaussianity of
linear functionals of Poisson random measures, properly adapted to the unit
circle $\mathbb{S}^{1}$ and initially introduced in \cite{peccati1, peccati2}%
. Here, we follow strictly the analogous findings developed on the sphere $%
\mathbb{S}^{2}$ in \cite{dmp}. Further general discussions and more
technical details can be found also in \cite{npbook, peccatitaqqu}. Let us
begin this section by introducing some distances between laws of random
variables, standard in the literature, which define topologies strictly
stronger than the convergence in distribution. While the former, the
Wasserstein distance, is used in univariate case, the latter, the $d_{2}$%
-distance, is exploited in the multivariate case. Let $g\in \mathcal{C}%
\left( \mathbb{R}^{q}\right) $: its Lipschitz norm is given by%
\begin{equation*}
\left\Vert g\right\Vert _{Lip}=\sup_{x,y\in \mathbb{R}^{q},x\neq y}\frac{%
\left\vert g\left( x\right) -g\left( y\right) \right\vert }{\left\Vert
x-y\right\Vert _{\mathbb{R}^{q}}}\text{ ;}
\end{equation*}%
for $g\in \mathcal{C}^{2}\left( \mathbb{R}^{q}\right) $, let $M_{2}\left(
g\right) $ be given by%
\begin{equation*}
M_{2}\left( g\right) =\sup_{y\in \mathbb{R}^{q}}\left\Vert Hess\left(
g\left( x\right) \right) \right\Vert _{op}\text{ ,}
\end{equation*}%
where $\left\Vert \cdot \right\Vert _{op}$ denotes the operator norm.

\begin{definition}
\label{distancew}Let $X,Y$ be two random vectors with values on $\mathbb{R}%
^{q}$, $q\geq 1$, such that $\mathbb{E}\left[ \left\Vert X\right\Vert _{%
\mathbb{R}^{q}}\right] ,$ $\mathbb{E}\left[ \left\Vert Y\right\Vert _{%
\mathbb{R}^{q}}\right] <\infty $. The Wasserstein distance $d_{W}$ between
the laws of $X~$and $Y$ is given by%
\begin{equation*}
d_{W}\left( X,Y\right) =\sup_{g:\left\Vert g\right\Vert _{Lip}\leq
1}\left\vert \mathbb{E}\left[ g\left( X\right) -g\left( Y\right) \right]
\right\vert \text{ .}
\end{equation*}
\end{definition}

\begin{definition}
\label{distanced2}Let $X,Y$ be two random vectors with values on $\mathbb{R}%
^{q}$, $q\geq 1$, such that $\mathbb{E}\left[ \left\Vert X\right\Vert _{%
\mathbb{R}^{q}}\right] ,$ $\mathbb{E}\left[ \left\Vert Y\right\Vert _{%
\mathbb{R}^{q}}\right] <\infty $. The distance $d_{2}$ between the laws of $%
X~$and $Y$ is given by%
\begin{equation*}
d_{2}\left( X,Y\right) =\sup_{g:\left\Vert g\right\Vert _{Lip}\leq
1,M_{2}\left( g\right) \leq 1}\left\vert \mathbb{E}\left[ g\left( X\right)
-g\left( Y\right) \right] \right\vert \text{ .}
\end{equation*}
\end{definition}

We recall now the definition of Poisson random measures (cfr. for instance 
\cite{peccatitaqqu}).

\begin{definition}
Let $\left( \Theta ,\mathcal{A},\mu \right) $ be a $\sigma $-finite measure
space, with no-atomic $\mu $. The collection of random variables $\left\{
N\left( A\right) :A\in \mathcal{A}\right\} $ taking values on $\mathbb{Z}%
_{+}\cup \left\{ \infty \right\} $ is a Poisson random measure (PRM) on $%
\Theta $ with control (intensity) measure $\mu $ if the following conditions
hold:

\begin{enumerate}
\item For every element $A\in \mathcal{A}$, $N\left( A\right) $ has Poisson
distribution with parameter $\mu \left( A\right) $;

\item If $A_{1},A_{2},...,A_{n}$ $\in \mathcal{A}$ are pairwise disjoint,
then $N\left( A_{1}\right) ,...,N\left( A_{n}\right) $ are independent.
\end{enumerate}
\end{definition}

\begin{remark}
\label{require}In our case, we choose $\Theta =\mathbb{R}_{+}\times \mathbb{S%
}^{1}$, with $\mathcal{A}=\mathcal{F}\left( \Theta \right) $, the Borel
subsets of $\Theta $; $N$ corresponds to a Poisson random measure on $\Theta 
$, governed by the intensity $\mu =\tau \times \nu $. As far as $\tau $ is
concerned, we have that the map $R_{t}:=\tau \left( \mathds{1}_{\left[ 0,t%
\right] }\right) $ is strictly increasing and divergent as $t\rightarrow
\infty $ and $\tau \left( \left\{ 0\right\} \right) =0$. The probability
measure on the unit circle $\nu $ is absolutely continuous with respect to
the Lebesgue uniform measure, i. e. $\nu \left( d\theta \right) =F\left(
\theta \right) \rho \left( d\theta \right) $. From now on, $F$ is bounded
away from zero, i.e. there exist two constants $M_{\infty },M_{0}>0$ such
that 
\begin{subequations}
\begin{equation}
0<M_{0}=\inf_{\theta \in \mathbb{S}^{1}}F\left( \theta \right) \leq F\left(
\theta \right) \leq \sup_{\theta \in \mathbb{S}^{1}}\left\vert F\left(
\theta \right) \right\vert =M_{\infty }<\infty \text{ , }\theta \in \mathbb{S%
}^{1}\text{.}  \label{boundF}
\end{equation}%
It follows that, for any fixed $t>0$, the mapping 
\end{subequations}
\begin{equation*}
A\mapsto N_{t}\left( A\right) =:N\left( \mathds{1}_{\left[ 0,t\right]
}\times A\right) \text{ }
\end{equation*}%
corresponds to a PRM\ over $\mathbb{S}^{1}$ with control 
\begin{equation*}
\mu _{t}\left( d\theta \right) =R_{t}\times \nu \left( d\theta \right)
=R_{t}\times F\left( \theta \right) d\theta \text{ ,}
\end{equation*}%
cfr. Point (i), Remark 2.4 in \cite{dmp}. Furthermore, as stated in Point
(ii), Remark 2.4 in \cite{dmp}, given $\left\{ X_{i},i\geq 1\right\} $, the
sequence of i.i.d. random variables on $\mathbb{S}^{1}$ with distribution $%
\nu $, for any $t>0$, it holds the identity in distribution between the two
mappings $A\mapsto N_{t}\left( A\right) $ and $A\mapsto
\sum_{i=1}^{N_{t}}\delta _{X_{i}}\left( A\right) $, where $\delta _{\cdot
}\left( \cdot \right) $ is the Kronecker delta function and $N_{t}$ is an
independent Poisson random variable with intensity $R_{t}$.
\end{remark}

For any kernel $f\in L^{2}\left( \Theta ,\mu \right) \cap L^{1}\left( \Theta
,\mu \right) $, with the notation $N\left( f\right) $ and $\widetilde{N}%
\left( f\right) $ we will denote respectively the Wiener-It\^o integrals
with respect to $N$ and the corresponding compensated measure $\widetilde{N}%
\left( A\right) =N\left( A\right) -\mu \left( A\right) $, $A\in \mathcal{F}%
\left( \Theta \right) $, with the convention $N\left( A\right) -\mu \left(
A\right) =\infty $ whereas $\mu \left( A\right) =\infty $. Furthermore, the
following \textit{isometry property holds: for every }$f,g$ $\in L^{2}\left(
\Theta ,\mu \right) $, 
\begin{equation*}
\mathbb{E}\left[ \widetilde{N}\left( f\right) \widetilde{N}\left( g\right) %
\right] =\int_{\Theta }f\left( x\right) g\left( x\right) dx\text{ ,}
\end{equation*}%
More details can be found in \cite{peccatitaqqu}.

Finally, we present to rate of convergence to Gaussianity of Wiener-It\^o
integrals with respect some compensated measure $\widetilde{N}$ obtained by
the combination of the Stein's method for probabilistic approximations and
the Malliavin calculus of variations, involving random variables lying in
the first Wiener chaos of $N$. These results are here properly adapted to $%
\mathbb{S}^{1}$. The first bound, concerning normal approximation in
dimension $1$ and the Wasserstein distance, was introduced in \cite{peccati1}%
, while the second one, for the $d$-dimensional case, $d>1$, was introduced
in \cite{peccati2}.

\begin{proposition}
\label{proppeccati}Let $h\in L^{2}\left( \mathbb{S}^{1},\mu _{t}\right) \cap
L^{3}\left( \mathbb{S}^{1},\mu _{t}\right) $, let $Z\sim \mathcal{N}\left(
0,1\right) $ and let $t>0$, then the following bound holds 
\begin{subequations}
\begin{equation}
d_{W}\left( \widetilde{N}_{t}\left( h\right) ,Z\right) \leq \left\vert
1-\left\Vert h\right\Vert _{L^{2}\left( \mathbb{S}^{1},\mu _{t}\right)
}^{2}\right\vert +\int_{\mathbb{S}^{1}}\left\vert h\left( \theta \right)
\right\vert ^{3}\mu _{t}\left( d\theta \right) \text{ .}  \label{bounduni}
\end{equation}%
Therefore, if $\lim_{t\rightarrow \infty }\left\Vert h\right\Vert
_{L^{2}\left( \mathbb{S}^{1},\mu _{t}\right) }=1$ and $\lim_{t\rightarrow
\infty }\left\Vert h\right\Vert _{L^{3}\left( \mathbb{S}^{1},\mu _{t}\right)
}=0$, it holds that 
\end{subequations}
\begin{equation*}
\widetilde{N}_{t}\left( h\right) \rightarrow _{d}Z\text{ .}
\end{equation*}%
For $d>1$, let $Z_{d}\sim \mathcal{N}_{d}\left( 0,\mathcal{\Sigma }\right) $%
, where $\Sigma $ is a $d$-dimensional positive-definite covariance matrix, $%
h_{1},...,h_{d}\in L^{2}\left( \mathbb{S}^{1},\mu _{t}\right) \cap
L^{3}\left( \mathbb{S}^{1},\mu _{t}\right) $; let%
\begin{equation*}
G_{t}:=\left( \widetilde{N}_{t}\left( h_{1}\right) ,...,\widetilde{N}%
_{t}\left( h_{d}\right) \right) \text{ ,}
\end{equation*}%
associated to a covariance matrix $\mathcal{C}_{d}$ whose elements are given
by%
\begin{equation*}
\mathcal{C}_{d}\left( i_{1},i_{2}\right) =\mathbb{E}\left[ \widetilde{N}%
_{t}\left( h_{i_{1}}\right) \widetilde{N}_{t}\left( h_{i_{2}}\right) \right]
=\left\langle h_{i_{1}},h_{i_{2}}\right\rangle _{L^{2}\left( \mathbb{S}%
^{1},\mu _{t}\right) }\text{ , }i_{1},i_{2}=1,...,d\text{ .}
\end{equation*}%
Hence it holds that 
\begin{eqnarray}
d_{2}\left( G_{t},Z_{d}\right) &\leq &\left\Vert \Sigma ^{-1}\right\Vert
_{op}\left\Vert \Sigma \right\Vert _{op}^{\frac{1}{2}}\left\Vert \Sigma -%
\mathcal{C}_{d}\right\Vert _{H.S.}+\frac{\sqrt{2\pi }}{8}\left\Vert \Sigma
^{-1}\right\Vert _{op}^{\frac{3}{2}}\left\Vert \Sigma \right\Vert _{op} 
\notag \\
&&\times \sum_{i_{1},i_{2},i_{3}=1}^{d}\int_{\mathbb{S}^{1}}\left\vert
h_{i_{1}}\left( \theta \right) \right\vert \left\vert h_{i_{2}}\left( \theta
\right) \right\vert \left\vert h_{i_{3}}\left( \theta \right) \right\vert
\mu _{t}\left( d\theta \right) \text{ ,}  \label{boundmulti}
\end{eqnarray}%
where $\left\Vert \cdot \right\Vert _{op}$ and $\left\Vert \cdot \right\Vert
_{H.S.}$ denote respectively operator and Hilbert-Schmidt norms.
\end{proposition}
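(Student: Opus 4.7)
The plan is to derive both bounds by combining Stein's method for normal approximation with the Malliavin integration-by-parts formula on the Poisson space, exactly as in the arguments of \cite{peccati1, peccati2}. For the univariate statement, I would start from Stein's equation: for each $1$-Lipschitz test function $g$ there is a solution $f_g \in \mathcal{C}^1(\mathbb{R})$ of $f_g'(x)-xf_g(x)=g(x)-\mathbb{E}[g(Z)]$ satisfying $\|f_g'\|_{\infty}\leq \sqrt{2/\pi}$ and $\|f_g''\|_{\infty}\leq 2$. Writing $F_t:=\widetilde N_t(h)$, it then suffices to estimate $|\mathbb{E}[f_g'(F_t)-F_t f_g(F_t)]|$ uniformly over such $f_g$.

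The central step is the integration-by-parts formula on the Poisson space: letting $D$ denote the add-one-cost operator $D_x F(\omega)=F(\omega\cup\{x\})-F(\omega)$ and $L$ the Ornstein--Uhlenbeck generator, one has $\mathbb{E}[F_t\,f_g(F_t)]=\mathbb{E}\bigl[\langle D f_g(F_t),-DL^{-1}F_t\rangle_{L^2(\mu_t)}\bigr]$. Because $F_t$ lies in the first Wiener chaos of $N$, one has $-DL^{-1}F_t=h$ and $D_x F_t=h(\theta)$, so
\begin{equation*}
\mathbb{E}[F_t f_g(F_t)] = \mathbb{E}\!\left[\int_{\mathbb{S}^1}\bigl(f_g(F_t+h(\theta))-f_g(F_t)\bigr)\,h(\theta)\,\mu_t(d\theta)\right].
\end{equation*}
A first-order Taylor expansion of $f_g$ around $F_t$ splits this into a main term $\mathbb{E}[f_g'(F_t)]\,\|h\|_{L^2(\mu_t)}^2$ plus a remainder controlled by $\tfrac12\|f_g''\|_\infty\int|h|^3\,d\mu_t$. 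Subtracting from $\mathbb{E}[f_g'(F_t)]$, using the isometry $\mathbb{E}[F_t^2]=\|h\|_{L^2(\mu_t)}^2$ and the Stein bounds above yields (\ref{bounduni}); the stated convergence then follows immediately.

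For the multivariate bound (\ref{boundmulti}), I would use Stein's method for $\mathcal{N}_d(0,\Sigma)$ with test functions $g$ such that $\|g\|_{\mathrm{Lip}}\leq 1$ and $M_2(g)\leq 1$, where the relevant solution of the multivariate Stein equation $\langle x,\nabla U(x)\rangle -\langle \Sigma,\mathrm{Hess}\,U(x)\rangle_{\mathrm{H.S.}}=g(x)-\mathbb{E}[g(Z_d)]$ satisfies the standard smoothness estimates in terms of $\|\Sigma^{-1}\|_{op}$ and $\|\Sigma\|_{op}$ (the \emph{smart path} / semigroup representation gives $\|\mathrm{Hess}\,U\|_\infty\lesssim \|\Sigma^{-1}\|_{op}\|\Sigma\|_{op}^{1/2}$ and $M_3(U)\lesssim \|\Sigma^{-1}\|_{op}^{3/2}\|\Sigma\|_{op}$). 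Setting $F_t^{(i)}:=\widetilde N_t(h_i)$, applying componentwise the same Malliavin integration-by-parts identity as above to $\mathbb{E}\bigl[\sum_i F_t^{(i)}\partial_i U(G_t)\bigr]$, and Taylor-expanding $\partial_i U$ to first order produces two contributions: a linear part that combines with $\langle \Sigma,\mathrm{Hess}\,U\rangle_{\mathrm{H.S.}}$ into an inner product with $\Sigma-\mathcal{C}_d$, which by Cauchy--Schwarz is bounded by $\|\mathrm{Hess}\,U\|_\infty\|\Sigma-\mathcal{C}_d\|_{\mathrm{H.S.}}$, and a cubic remainder bounded by $\tfrac12 M_3(U)\sum_{i_1,i_2,i_3}\int|h_{i_1}h_{i_2}h_{i_3}|\,d\mu_t$. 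Plugging the Stein bounds on $\|\mathrm{Hess}\,U\|_\infty$ and $M_3(U)$ gives exactly (\ref{boundmulti}).

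The main technical obstacle is the correct application of the Malliavin integration-by-parts formula on Poisson space combined with the Stein estimates: in the univariate case this is essentially a one-line Taylor expansion once the identity $-DL^{-1}F_t=h$ is available, while in the multivariate case the delicate point is to keep track precisely of the operator and Hilbert--Schmidt norms produced by the multivariate Stein solution and to interchange expectation with the integrals defining $\widetilde{N}_t$, both of which are justified under the integrability assumption $h_i\in L^2(\mu_t)\cap L^3(\mu_t)$.
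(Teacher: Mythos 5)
The paper does not actually prove Proposition \ref{proppeccati}: it is stated as a known result, quoted and adapted to $\mathbb{S}^{1}$ from \cite{peccati1} (univariate Wasserstein bound) and \cite{peccati2} (multivariate $d_{2}$ bound). Your sketch correctly reproduces the standard Stein--Malliavin argument underlying those references --- Stein's equation, the Poisson integration-by-parts formula with $-DL^{-1}F=DF=h$ for first-chaos integrals, and a Taylor expansion of the add-one-cost increment --- so it matches the (cited) proof in approach; the only part left implicit is the derivation of the exact constants $\left\Vert \Sigma^{-1}\right\Vert_{op}\left\Vert \Sigma\right\Vert_{op}^{1/2}$ and $\frac{\sqrt{2\pi}}{8}\left\Vert \Sigma^{-1}\right\Vert_{op}^{3/2}\left\Vert \Sigma\right\Vert_{op}$ from the smoothness estimates on the multivariate Stein solution, which is precisely what \cite{peccati2} supplies.
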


\section{Rates of convergence to Gaussianity of Mexican needlet
coefficients\label{secmain}}

In this section will study the asymptotic behaviour of the means of the
Mexican needlet coefficients, establishing two quantitative central limit
theorems and their explicit rates of convergence. Under the assumptions
stated in the previous section, let $\left\{ X_{i},i\geq 1\right\} $ be a
sequence of i.i.d. random variables with distribution $\nu $, independent of
the Poisson process $\widetilde{N_{t}}\left( \mathbb{S}^{1}\right) $.\
Consider the following kernel 
\begin{equation}
h_{jq;s}^{\left( R_{t}\right) }\left( \theta \right) :=\frac{\psi
_{jq;s}\left( \theta \right) }{\sqrt{R_{t}}\sigma _{jq;s}}\text{ , }\theta
\in \mathbb{S}^{1}\text{ ,}  \label{hkernel}
\end{equation}%
where $h_{jq;s}^{\left( R_{t}\right) }\in L^{1}\left( \mathbb{S}^{1},\mu
_{t}\right) \cap L^{2}\left( \mathbb{S}^{1},\mu _{t}\right) \cap L^{3}\left( 
\mathbb{S}^{1},\mu _{t}\right) $. Let $b_{jq;s}:=\mathbb{E}\left[ \psi
_{jq;s}\left( X_{1}\right) \right] $ and $\sigma _{jq;s}^{2}:=\mathbb{E}%
\left[ \psi _{jq;s}^{2}\left( X_{1}\right) \right] $. Observe that, using (%
\ref{boundF}) and (\ref{normbound}),%
\begin{equation}
0<M_{0}\eta \widetilde{c_{2}}\leq M_{0}\left\Vert \psi _{jq;s}\right\Vert
_{L^{2}\left( \mathbb{S}^{1}\right) }^{2}\leq \sigma _{jq;s}^{2}\leq
M_{\infty }\left\Vert \psi _{jq;s}\right\Vert _{L^{2}\left( \mathbb{S}%
^{1}\right) }^{2}\leq M_{\infty }\eta \widetilde{C_{2}}<\infty
\label{sigmabound}
\end{equation}%
Let us write%
\begin{equation*}
\widetilde{\beta }_{jq;s}^{\left( R_{t}\right) }:=\widetilde{N_{t}}\left(
h_{jq;s}^{\left( R_{t}\right) }\right) =\sum_{\theta \in \limfunc{supp}%
\left( N_{t}\right) }h_{jq;s}^{\left( R_{t}\right) }\left( \theta \right)
-R_{t}\int_{\mathbb{S}^{1}}h_{jq;s}^{\left( R_{t}\right) }\left( \theta
\right) \nu \left( d\theta \right) \text{ ,}
\end{equation*}%
or, in view of the Remark \ref{require}, 
\begin{equation}
\widetilde{\beta }_{jq;s}^{\left( R_{t}\right) }=\frac{\sum_{i=1}^{N_{t}%
\left( \mathbb{S}^{1}\right) }\psi _{jq;s}\left( X_{i}\right) -R_{t}b_{jq;s}%
}{\sqrt{R_{t}}\sigma _{jq;s}}\text{ ,}  \label{betabta}
\end{equation}%
It is immediate to see that $\mathbb{E}\left[ \widetilde{\beta }%
_{jq;s}^{\left( R_{t}\right) }\right] =0$, $\mathbb{E}\left[ \left( 
\widetilde{\beta }_{jq;s}^{\left( R_{t}\right) }\right) ^{2}\right] =1$. Let
us finally define the $d$-dimensional vector%
\begin{equation}
Y_{t}=\left( \widetilde{\beta }_{jq_{1};s}^{\left( R_{t}\right) },...,%
\widetilde{\beta }_{jq_{d};s}^{\left( R_{t}\right) }\right) \text{ ,}
\label{vectorbeta}
\end{equation}%
while each element of its covariance matrix is given by%
\begin{equation}
\Upsilon _{t,j,s}\left( q_{i_{1}},q_{i_{2}}\right) :=\mathbb{E}\left[ 
\widetilde{\beta }_{jq_{i_{1}};s}^{\left( R_{t}\right) }\widetilde{\beta }%
_{jq_{i_{2}};s}^{\left( R_{t}\right) }\right] \text{ , }i_{1},i_{2}=1,...,d%
\text{ .}  \label{covbeta}
\end{equation}

Let $Z\sim \mathcal{N}\left( 0,I_{d}\right) \,$, where $I_{d}$ is the $d$%
-dimensional identity matrix. Hence the following results holds.

\begin{theorem}
\label{theorempeccatiunivariate}Let $\widetilde{\beta }_{jq;s}^{\left(
R_{t}\right) }$ by given by (\ref{betabta}). there exist $C_{0}$ such that%
\begin{equation*}
d_{W}\left( \widetilde{\beta }_{jq;s}^{\left( R_{t}\right) },Z\right) \leq
C_{0}\left( B^{-j}R_{t}\right) ^{-\frac{1}{2}}\,\ \text{.}
\end{equation*}%
Furthermore, if $\left( B^{-j}R_{t}\right) ^{-\frac{1}{2}}=o_{t}\left(
1\right) $, we have $\widetilde{\beta }_{jq;s}^{\left( R_{t}\right)
}\rightarrow _{d}Z$.
\end{theorem}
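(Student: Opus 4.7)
The plan is to invoke the univariate Stein--Malliavin bound \eqref{bounduni} in Proposition \ref{proppeccati} directly with $h=h_{jq;s}^{(R_t)}$, and reduce both terms of the right-hand side to quantities controlled by the $L^p$-norm estimates \eqref{normbound} for Mexican needlets and the variance lower bound \eqref{sigmabound}.

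First I would verify that the standardization chosen in \eqref{hkernel} makes the first summand in \eqref{bounduni} vanish exactly. Indeed, since $\mu_t(d\theta)=R_t F(\theta)\rho(d\theta)$ and $\sigma_{jq;s}^2 = \int_{\mathbb{S}^1}\psi_{jq;s}^2(\theta)F(\theta)\rho(d\theta)$, a direct computation gives
\begin{equation*}
\bigl\|h_{jq;s}^{(R_t)}\bigr\|_{L^2(\mathbb{S}^1,\mu_t)}^{2}=\frac{1}{R_t\sigma_{jq;s}^2}\int_{\mathbb{S}^1}\psi_{jq;s}^2(\theta)\,R_t F(\theta)\rho(d\theta)=1,
\end{equation*}
so the term $|1-\|h_{jq;s}^{(R_t)}\|_{L^2(\mathbb{S}^1,\mu_t)}^2|$ is zero. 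This is the key design feature of the kernel, so the whole bound is driven by the $L^3$ integral.

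Next I would estimate the remaining summand. Using $F(\theta)\leq M_{\infty}$ from \eqref{boundF}, one has
\begin{equation*}
\int_{\mathbb{S}^1}\bigl|h_{jq;s}^{(R_t)}(\theta)\bigr|^3\mu_t(d\theta)=\frac{1}{R_t^{1/2}\sigma_{jq;s}^3}\int_{\mathbb{S}^1}|\psi_{jq;s}(\theta)|^3F(\theta)\rho(d\theta)\leq \frac{M_{\infty}}{R_t^{1/2}\sigma_{jq;s}^3}\|\psi_{jq;s}\|_{L^3(\mathbb{S}^1)}^3.
\end{equation*}
The $L^3$-norm bound in \eqref{normbound} yields $\|\psi_{jq;s}\|_{L^3(\mathbb{S}^1)}^3\leq \widetilde{C_3}\,\eta^{3/2}B^{j/2}$, and the lower bound in \eqref{sigmabound} gives $\sigma_{jq;s}^3\geq (M_0\widetilde{c_2})^{3/2}\eta^{3/2}$. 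Combining these two estimates the $\eta^{3/2}$ factors cancel and the upper bound becomes of order $B^{j/2}R_t^{-1/2}=(B^{-j}R_t)^{-1/2}$, with a constant $C_0$ depending only on $M_0,M_{\infty},\widetilde{c_2},\widetilde{C_3},s$.

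Plugging these two facts back into \eqref{bounduni} produces the claimed quantitative bound $d_W(\widetilde{\beta}_{jq;s}^{(R_t)},Z)\leq C_0(B^{-j}R_t)^{-1/2}$, and the asymptotic conclusion $\widetilde{\beta}_{jq;s}^{(R_t)}\to_d Z$ under $(B^{-j}R_t)^{-1/2}=o_t(1)$ is immediate from the fact that $d_W$ metrizes a topology stronger than convergence in distribution. There is no real obstacle in this univariate case: the normalization in \eqref{hkernel} is precisely calibrated so that the variance term is exactly $1$, and the $L^3$ part reduces to the known concentration estimates for circular Mexican needlets. The genuinely delicate work is postponed to the multivariate Theorem \ref{theorempeccati}, where the cross moments in the covariance $\Upsilon_{t,j,s}(q_{i_1},q_{i_2})$ and the triple integrals across distinct pixels must be controlled via the exponential localization of $\psi_{jq;s}$.
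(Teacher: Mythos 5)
Your proposal is correct and follows essentially the same route as the paper: the first term of \eqref{bounduni} vanishes by the exact normalization in \eqref{hkernel}, and the $L^{3}$ term is bounded using \eqref{boundF}, the lower bound in \eqref{sigmabound}, and the $L^{3}$-norm estimate in \eqref{normbound}, with the $\eta^{3/2}$ factors cancelling to give the rate $(B^{-j}R_{t})^{-1/2}$. Nothing is missing.
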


\begin{theorem}
\label{theorempeccati}Let $Y_{t}$ by given by (\ref{vectorbeta}). there
exist $C_{1},C_{2}$ such that%
\begin{equation*}
d_{2}\left( Y_{t},Z_{d}\right) \leq \widetilde{C_{2}^{\prime }}\exp \left(
-B^{2j}\left( x_{jq_{1}}-x_{jq_{2}}\right) ^{2}\right) \left(
1+B^{2sj}\left( x_{jq_{1}}-x_{jq_{2}}\right) ^{2s}\right) +\widetilde{%
C_{2}^{\prime }}d_{t}\left( B^{-j}R_{t}\right) ^{-\frac{1}{2}}\,\ \text{.}
\end{equation*}%
Furthermore, if $d_{t}\left( B^{-j}R_{t}\right) ^{-\frac{1}{2}}=o_{t}\left(
1\right) $, we have $Y_{t}\rightarrow _{d}Z_{d}$.
\end{theorem}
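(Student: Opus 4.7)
The plan is to apply the multivariate Stein-Malliavin inequality (\ref{boundmulti}) of Proposition \ref{proppeccati} with $\Sigma = I_d$, kernels $h_i = h_{jq_i;s}^{(R_t)}$ defined by (\ref{hkernel}), and $G_t = Y_t$. With $\Sigma = I_d$ the operator-norm prefactors are absolute constants, so the right-hand side reduces to controlling two ingredients: (a) the Hilbert-Schmidt distance $\|I_d - \Upsilon_{t,j,s}\|_{H.S.}$ between the target and empirical covariance; (b) the multi-indexed third-moment quantity $\sum_{i_1,i_2,i_3=1}^{d_t} \int_{\mathbb{S}^1}\prod_{k=1}^{3}|h_{jq_{i_k};s}^{(R_t)}|\,d\mu_t$.

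For (a), the isometry property of compensated Wiener-It\^{o} integrals gives
$$\Upsilon _{t,j,s}(q_{i_{1}},q_{i_{2}}) = \frac{1}{\sigma _{jq_{i_{1}};s}\sigma _{jq_{i_{2}};s}}\int_{\mathbb{S}^{1}}\psi _{jq_{i_{1}};s}(\theta )\psi _{jq_{i_{2}};s}(\theta )F(\theta )\,d\theta .$$
Diagonal entries equal $1$ by construction, so only off-diagonal pairs contribute to $\|I_d - \Upsilon_{t,j,s}\|_{H.S.}$. Inserting the Mexican localization estimate stated after (\ref{areaandcardinality}), together with the lower bound $\sigma_{jq;s}^{2}\gtrsim \eta$ from (\ref{sigmabound}) and the two-sided bound (\ref{boundF}) on $F$, yields a Gaussian-type decay of exactly the shape displayed in the theorem; the sum over nearby pairs is absorbed by the exponential factor evaluated at the closest pair, which we identify with $(q_1,q_2)$ in the stated bound.

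For (b), combining (\ref{sigmabound}), (\ref{boundF}) and the Mexican $L^{3}$-bound (\ref{normbound}) gives, in the coincident triple case $i_1 = i_2 = i_3$, a contribution of order $B^{j/2}/R_t^{1/2} = (B^{-j}R_t)^{-1/2}$; the outer sum over $i_1 = 1,\dots,d_t$ produces the factor $d_t$ in the claim. To handle the cross-triples, I would use the super-Gaussian spatial concentration of $\psi_{jq_i;s}$ together with the quasi-uniform partition hypothesis (\ref{areaandcardinality}) to show that for each fixed $i_1$ only $O(1)$ triples $(i_1,i_2,i_3)$ contribute significantly, so the full triple sum stays of order $d_t(B^{-j}R_t)^{-1/2}$. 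Combining (a) and (b) gives the claimed inequality, and the convergence $Y_t \to_d Z_d$ is then immediate because $d_2$ metrises a topology strictly stronger than weak convergence, with both terms vanishing under the stated hypothesis.

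The main technical obstacle is precisely the localization argument in step (b): a naive bound on the triple sum would produce a $d_t^3$ factor. The reduction to linear dependence on $d_t$ crucially exploits that each Mexican needlet is concentrated on a region of diameter $\sim B^{-j}$, so that only triples whose cubature points $x_{jq_{i_k}}$ lie within $O(B^{-j})$ of one another contribute a non-negligible overlap integral; the quasi-uniformity of the partition guarantees only $O(1)$ such partners per reference index. A secondary care is keeping every implicit constant independent of $q$, which follows from the two-sided estimates in (\ref{sigmabound}) and (\ref{areaandcardinality}) together with the uniform boundedness of $F$.
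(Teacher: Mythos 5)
Your proposal is correct and follows essentially the same route as the paper: apply the multivariate bound (\ref{boundmulti}) with $\Sigma=I_d$, control $\|I_d-\Upsilon_{t,j,s}\|_{H.S.}$ via the localization of the cross-covariance (the paper's Lemma \ref{lemmacovariance}), and reduce the triple sum from $d_t^3$ to $d_t$ via spatial concentration (the paper's Lemma \ref{lemmapeccati2}). The only cosmetic difference is that the paper implements your ``$O(1)$ significant partners per index'' step by first proving the pointwise bound $\sum_{q}|\psi_{jq;s}(\theta)|\leq C_{\tau}\eta^{1/2}B^{j/2}$ and then integrating its cube against a single $L^1$-norm, which is the same localization idea packaged slightly differently.
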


Observe that in both the cases we have the central limit theorem the Mexican
needlet coefficients converge in distribution to the (univariate and
multivariate) Gaussian distribution when $B^{j}=B^{j_{t}}=o\left(
R_{t}\right) $.

\begin{remark}
Recall that the object $B^{-j}R_{t}$ can viewed as the effective sample size
of the Mexican needlet (cfr. Remark 4.3 in \cite{dmp}). Indeed, $B^{-j}$ can
be thought as the 'effective' scale of the wavelet $\psi _{jq,s}$, i.e. the
dimension of the region $E_{jq}$. Hence, we obtain%
\begin{equation*}
\mathbb{E}\left[ \limfunc{card}\left\{ X_{i}:d\left( X_{i},x_{jq}\right)
\leq B^{-j}\right\} \right] \simeq R_{t}\int_{d\left( X_{i},x_{jq}\right)
\leq B^{-j}}F\left( \theta \right) d\theta \text{ ,}
\end{equation*}%
where%
\begin{equation*}
M_{0}B^{-j}R_{t}\leq R_{t}\int_{d\left( X_{i},x_{jq}\right) \leq
B^{-j}}F\left( \theta \right) d\theta \leq M_{\infty }B^{-j}R_{t}\text{ .}
\end{equation*}
\end{remark}

In what follows, before concluding this section, we will prove that the
explicit bounds deduced in the Theorems \ref{theorempeccatiunivariate} and %
\ref{theorempeccati} can be also extended to the case of linear statistics
based on vector of i.i.d. observations rather than on a Poisson measure, by
paying the price of an additional factor proportional to $d\left( n\right)
n^{-\frac{1}{4}}$. Let us define the de-Poissonized vector 
\begin{equation*}
Y_{n}^{\prime }=\frac{1}{\sqrt{n}}\left( \frac{\sum_{i=1}^{n}\psi _{j\left(
n\right) q_{1};s}\left( X_{i}\right) }{\sigma _{j\left( n\right) ,q_{1};s}}%
,...,\frac{\sum_{i=1}^{n}\psi _{j\left( n\right) q_{d\left( n\right)
};s}\left( X_{i}\right) }{\sigma _{j\left( n\right) ,q_{d\left( n\right) };s}%
}\right) \text{ ,}
\end{equation*}%
and let us recall the following result from \cite{dmp} (Lemma 1.1).

\begin{proposition}
\label{depoisson}Let $R\left( n\right) =n$ and consider $X_{i},i\geq 1$, as
random variables uniformly distributed over $\mathbb{S}^{1}$. Then, there
exists a constant $M_{dP}>0$ such that for every $n$ and every Lipschitz
function $f:\mathbb{R}^{d\left( n\right) }\mapsto \mathbb{R}$, the following
inequality holds%
\begin{equation*}
\left\vert \mathbb{E}\left[ f\left( G_{n}^{\prime }\right) -f\left(
G_{n}\right) \right] \right\vert \leq M_{dP}\left\Vert f\right\Vert _{Lip}%
\frac{d\left( n\right) }{n^{\frac{1}{4}}}\text{.}
\end{equation*}
\end{proposition}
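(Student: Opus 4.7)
The plan is a standard de-Poissonization argument via coupling. I would first realize both vectors on a common probability space by invoking Remark \ref{require}: take an i.i.d.\ sequence $\{X_i\}_{i\geq 1}$ uniformly distributed on $\mathbb{S}^1$ together with an independent Poisson random variable $N_n$ of intensity $n$, and set
\begin{equation*}
G_n[k] = \frac{1}{\sqrt{n}\, \sigma_{j(n),q_k;s}} \sum_{i=1}^{N_n} \psi_{j(n)q_k;s}(X_i), \qquad G_n'[k] = \frac{1}{\sqrt{n}\, \sigma_{j(n),q_k;s}} \sum_{i=1}^{n} \psi_{j(n)q_k;s}(X_i).
\end{equation*}
Because $w_s(0)=0$ and the $X_i$ are uniform, one has $b_{j(n)q_k;s}=0$, so the centering correction $R_t b_{j(n)q_k;s}$ in \eqref{betabta} vanishes and the two representations line up. The $k$-th coordinate of $G_n - G_n'$ is then a signed partial sum of exactly $|N_n - n|$ i.i.d.\ summands.

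The next step is to control $|\mathbb{E}[f(G_n') - f(G_n)]|$ via Lipschitz continuity. Using the elementary inequality $\|\cdot\|_{\mathbb{R}^{d(n)}} \leq \|\cdot\|_1$, I obtain
\begin{equation*}
|\mathbb{E}[f(G_n') - f(G_n)]| \leq \|f\|_{Lip}\, \mathbb{E}\|G_n' - G_n\|_{\mathbb{R}^{d(n)}} \leq \|f\|_{Lip} \sum_{k=1}^{d(n)} \mathbb{E}|G_n[k] - G_n'[k]|.
\end{equation*}
For each coordinate, I would condition on $N_n$ and use the independence of $\{X_i\}$ from $N_n$, together with the defining identity $\sigma_{j(n),q_k;s}^2 = \mathbb{E}[\psi_{j(n)q_k;s}^2(X_1)]$, to get
\begin{equation*}
\mathbb{E}\bigl[(G_n[k] - G_n'[k])^2\bigr] = \frac{\mathbb{E}|N_n - n|}{n} \leq \frac{1}{\sqrt{n}},
\end{equation*}
where the last step uses $\mathbb{E}|N_n - n| \leq \sqrt{\mathrm{Var}(N_n)} = \sqrt{n}$. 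Jensen's inequality then yields $\mathbb{E}|G_n[k] - G_n'[k]| \leq n^{-1/4}$ uniformly in $k$.

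Summing over the $d(n)$ coordinates delivers the stated bound with some absolute constant $M_{dP}$. The argument closely parallels Lemma 1.1 in \cite{dmp} and is essentially mechanical; I do not expect a substantial obstacle. The only point requiring care is the coupling step, where one must verify that the compensated Wiener--It\^o representation of $\widetilde{\beta}_{jq;s}^{(R_t)}$ in \eqref{betabta} reduces, under the uniform law, to the plain partial-sum representation used above. Since this reduction hinges on $b_{j(n)q_k;s}=0$, which follows directly from the shape of the weight function $w_s(x) = x^s e^{-x}$, the bookkeeping is routine. If a sharper rate were needed, Cauchy--Schwarz on $\mathbb{E}\|G_n-G_n'\|_2$ would improve the $\ell^1$ bound to $\sqrt{d(n)}\,n^{-1/4}$, but the weaker form already meets the claim.
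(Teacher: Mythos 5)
Your argument is correct: the coupling via Remark \ref{require}, the observation that $b_{j(n)q;s}=0$ under the uniform law (because only the $k=0$ term survives the integration and $w_{s}(0)=0$), the conditional second-moment computation giving $\mathbb{E}\bigl[(G_n[k]-G_n'[k])^{2}\bigr]=\mathbb{E}\left\vert N_n-n\right\vert /n\leq n^{-1/2}$, and the Jensen plus $\ell^{1}$-summation step all hold and deliver the stated bound. Note that the paper itself offers no proof of this proposition --- it is imported verbatim as Lemma 1.1 of \cite{dmp} --- and your de-Poissonization coupling is exactly the standard argument behind that cited result (your closing remark that Cauchy--Schwarz across coordinates would sharpen $d(n)$ to $\sqrt{d(n)}$ is also correct), so there is nothing to reconcile.
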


As consequence, there exists $C_{dP}>0$ such that following upper bound holds%
\begin{equation*}
d_{2}\left( G_{n},Z_{d\left( n\right) }\right) \leq C_{dP}\left( \left(
B^{-j\left( n\right) }n\right) ^{-\frac{1}{2}}+\frac{M_{dP}}{n^{\frac{1}{4}}}%
\right) d\left( n\right) \text{ .}
\end{equation*}

\section{Proofs \label{secpeccati}}

In this Section we describe extensively the proofs of the Theorems \ref%
{theorempeccatiunivariate} and \ref{theorempeccati} and of some auxiliary
results.

\subsection{Proofs of Theorems \protect\ref{theorempeccatiunivariate} and 
\protect\ref{theorempeccati}}

Here we present the exhaustive proofs of the Theorems \ref%
{theorempeccatiunivariate} \ref{theorempeccati}, obtained by using the
explicit kernel (\ref{hkernel}) in the Proposition \ref{proppeccati} and
exploiting the properties of the Mexican needlets such as (\ref{normbound}).

\begin{proof}[Proof of the Theorem \protect\ref{theorempeccatiunivariate}]
Using (\ref{hkernel}) and (\ref{sigmabound}) in (\ref{bounduni}), we have 
\begin{equation*}
1-\left\Vert h_{jq;s}^{\left( R_{t}\right) }\right\Vert _{L^{2}\left( 
\mathbb{S}^{1},\mu _{t}\right) }^{2}=0\text{ , }
\end{equation*}%
while%
\begin{eqnarray*}
\int_{\mathbb{S}^{1}}\left\vert h_{jq;s}^{\left( R_{t}\right) }\left( \theta
\right) \right\vert ^{3}\mu _{t}\left( d\theta \right) &=&\int_{\mathbb{S}%
^{1}}\frac{\left\vert \psi _{jq;s}\left( \theta \right) \right\vert ^{3}}{%
R_{t}^{\frac{3}{2}}\sigma _{jq;s}^{3}}\mu _{t}\left( d\theta \right) \\
&\leq &\frac{R_{t}^{-\frac{1}{2}}}{\left( M_{0}\eta \widetilde{c_{2}}\right)
^{\frac{3}{2}}}\int_{\mathbb{S}^{1}}\left\vert \psi _{jq;s}\left( \theta
\right) \right\vert ^{3}\nu \left( d\theta \right) \\
&\leq &\frac{R_{t}^{-\frac{1}{2}}M_{\infty }}{\left( M_{0}\eta \widetilde{%
c_{2}}\right) ^{\frac{3}{2}}}\left\Vert \psi _{jq;s}\left( \theta \right)
\right\Vert _{L^{3}\left( \mathbb{S}^{1}\right) }^{3} \\
&\leq &\frac{\widetilde{C_{3}}M_{\infty }}{\left( M_{0}\widetilde{c_{2}}%
\right) ^{\frac{3}{2}}}\left( R_{t}B^{-j}\right) ^{-\frac{1}{2}}\text{ ,}
\end{eqnarray*}%
where in the last equality we have used (\ref{normbound}).
\end{proof}

As far as the multivariate case is concerned, we obtain the following
results.

\begin{proof}[Proof of the Theorem \protect\ref{theorempeccati}]
By definition, we have 
\begin{equation*}
\left\Vert I_{d}^{-1}\right\Vert _{op}=\left\Vert I_{d}\right\Vert _{op}^{%
\frac{1}{2}}=1\text{ ,}
\end{equation*}%
while, following the Lemma \ref{lemmacovariance}, it holds that%
\begin{equation*}
\left\Vert I_{d}-\Upsilon _{t,j,s}\right\Vert _{H.S.}
\end{equation*}%
\begin{eqnarray*}
&\leq &\sqrt{\sum_{q_{1}=q_{2}}\mathbb{E}\left[ \widetilde{\beta }%
_{jq_{1};s}^{\left( R_{t}\right) }\widetilde{\beta }_{jq_{2};s}^{\left(
R_{t}\right) }\right] } \\
&\leq &d\sup_{q_{1}\neq q_{2}=1,...,d}\frac{M_{\infty }}{M_{0}\widetilde{%
c_{2}}}e^{\left( -B^{2j}\left( x_{jq_{1}}-x_{jq_{2}}\right) ^{2}\right)
}\left( 1+\left( B^{j}\left( x_{jq_{1}}-x_{jq_{2}}\right) \right)
^{2s}\right) \text{ .}
\end{eqnarray*}%
On the other hand, the Lemma \ref{lemmapeccati2} states that there exists $%
\widetilde{C}>0$ such that%
\begin{equation*}
\sum_{i_{1},i_{2},i_{3}=1}^{d}\int_{\mathbb{S}^{1}}\left\vert \psi
_{jq_{i_{1}};s}\left( \theta \right) \right\vert \left\vert \psi
_{jq_{i_{2}};s}\left( \theta \right) \right\vert \left\vert \psi
_{jq_{i_{3}};s}\left( \theta \right) \right\vert \mu _{t}\left( d\theta
\right) \leq \widetilde{C}R_{t}dB^{\frac{j}{2}}\text{ .}
\end{equation*}%
Hence, we have that%
\begin{equation*}
\sum_{i_{1},i_{2},i_{3}=1}^{d}\int_{\mathbb{S}^{1}}\left\vert
h_{jq_{i_{1}};s}^{\left( R_{t}\right) }\left( \theta \right) \right\vert
\left\vert h_{jq_{i_{2}};s}^{\left( R_{t}\right) }\left( \theta \right)
\right\vert \left\vert h_{jq_{i_{3}};s}^{\left( R_{t}\right) }\left( \theta
\right) \right\vert \mu _{t}\left( d\theta \right) 
\end{equation*}
\begin{equation*}
\leq \frac{R_{t}^{-\frac{3}{2}}}{\left( M_{0}\eta \widetilde{c_{2}}\right) ^{%
\frac{3}{2}}}\sum_{i_{1},i_{2},i_{3}=1}^{d}\int_{\mathbb{S}^{1}}\left\vert
\psi _{jq_{i_{1}};s}\left( \theta \right) \right\vert \left\vert \psi
_{jq_{i_{2}};s}\left( \theta \right) \right\vert \left\vert \psi
_{jq_{i_{3}};s}\left( \theta \right) \right\vert \mu _{t}\left( d\theta
\right) 
\end{equation*}%
\begin{equation*}
\leq \frac{d_{t}\left( R_{t}B^{-j}\right) ^{-\frac{1}{2}}\widetilde{C}%
M_{\infty }}{\left( M_{0}\widetilde{c_{2}}\right) ^{\frac{3}{2}}}\text{ ,}
\end{equation*}%
as claimed.
\end{proof}

\begin{remark}
As far as the dimension $d$ is concerned, the bound proposed in the Theorem %
\ref{theorempeccati} holds both if $d$ is fixed and if $d=d_{t}$ grows with $%
t$, see also \cite{dmp}. In the former case, the bound obtained depends only
on $\left( B^{-j_{t}}R_{t}\right) ^{-\frac{1}{2}}$, while in the latter we
obtain 
\begin{equation*}
d_{w}\left( \widetilde{\beta }_{jq;s}^{\left( R_{t}\right) },Z\right) \leq
Cd_{t}\left( B^{-j_{t}}R_{t}\right) ^{-\frac{1}{2}}\text{.}
\end{equation*}%
To attain the convergence in distribution, it suffices that $%
d_{t}=o_{t}\left( B^{-j_{t}}R_{t}\right) ^{-\frac{1}{2}}$
\end{remark}

\subsection{Auxiliary results}

This subsection includes the proofs of the auxiliary Lemmas, mainly related
to asymptotic behaviour of the covariance matrix given in (\ref{covbeta})

\begin{lemma}
\label{lemmacovariance}For any $j>0$ and $1\leq q_{1}\neq q_{2}\leq
Q_{j^{\prime }}$, there exists a constant $C_{\xi }>0$ such that%
\begin{equation*}
\left\vert \mathbb{E}\left[ \left\vert \widetilde{\beta }_{jq_{1};s}^{\left(
R_{t}\right) }\widetilde{\beta }_{jq_{2};s}^{\left( R_{t}\right)
}\right\vert \right] \right\vert \leq C_{\xi }\exp \left( -B^{2j}\left(
x_{jq_{1}}-x_{jq_{2}}\right) ^{2}\right) \left( 1+\left( B^{j}\left(
x_{jq_{1}}-x_{jq_{2}}\right) \right) ^{2s}\right) \text{ .}
\end{equation*}
\end{lemma}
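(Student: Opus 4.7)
The plan is to reduce the covariance of the two centred Poisson integrals to a deterministic integral against the product of two Mexican needlets, and then to exploit the sharp spatial concentration of $\psi_{jq;s}$ to produce the Gaussian-times-polynomial bound. Since $\widetilde{\beta}_{jq;s}^{(R_t)}=\widetilde{N}_t(h_{jq;s}^{(R_t)})$ with $h_{jq;s}^{(R_t)}$ as in (\ref{hkernel}), the isometry property for Wiener--It\^o integrals against $\widetilde{N}_t$ yields
\begin{equation*}
\mathbb{E}\bigl[\widetilde{\beta}_{jq_1;s}^{(R_t)}\widetilde{\beta}_{jq_2;s}^{(R_t)}\bigr]=\frac{1}{\sigma_{jq_1;s}\sigma_{jq_2;s}}\int_{\mathbb{S}^1}\psi_{jq_1;s}(\theta)\psi_{jq_2;s}(\theta)F(\theta)\,\rho(d\theta),
\end{equation*}
since the $R_t$ in $\mu_t=R_tF\rho$ cancels the $R_t^{-1}$ produced by the two kernels. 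Applying $F\leq M_\infty$ from (\ref{boundF}) and $\sigma_{jq;s}^{2}\geq M_0\eta\widetilde{c_2}$ from (\ref{sigmabound}) reduces the problem to the deterministic estimate of $\int_{\mathbb{S}^1}|\psi_{jq_1;s}\psi_{jq_2;s}|\,\rho(d\theta)$.

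For this integral I would substitute the Mexican needlet localisation property recalled in Section \ref{subharmonic} into each factor. Writing $\bar{x}=(x_{jq_1}+x_{jq_2})/2$ and using the algebraic identity
\begin{equation*}
(\theta-x_{jq_1})^2+(\theta-x_{jq_2})^2=2(\theta-\bar{x})^2+\tfrac{1}{2}(x_{jq_1}-x_{jq_2})^2,
\end{equation*}
the product of the two Gaussian factors splits as a Gaussian in $\theta-\bar{x}$ times an explicit Gaussian in $x_{jq_1}-x_{jq_2}$; the substitution $u=B^j(\theta-\bar{x})$ then turns the $\theta$-integral into a Gaussian-weighted integral in $u$ whose Jacobian $B^{-j}$ exactly cancels the $B^j$ prefactor coming from the two $B^{j/2}$ amplitudes in the localisation bound. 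The polynomial factors $(1+(B^j(\theta-x_{jq_i})/2)^{2s})$ are expanded using $|\theta-x_{jq_i}|\leq|\theta-\bar{x}|+|x_{jq_1}-x_{jq_2}|/2$ together with $(a+b)^{2s}\leq 2^{2s-1}(a^{2s}+b^{2s})$, so that the integrable Gaussian moments in $u$ contribute only a finite constant depending on $s$.

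The main technical obstacle is controlling the polynomial degree: a naive expansion of the product of the two $(1+\cdot^{2s})$ factors generates a term of degree $4s$ in $B^j|x_{jq_1}-x_{jq_2}|$, whereas the lemma asserts degree $2s$. This is handled by the elementary inequality $r^k\exp(-\alpha r^2)\leq C_{k,\alpha}\exp(-\alpha r^2/2)$ applied with $r=B^j|x_{jq_1}-x_{jq_2}|$, which converts any excess polynomial into a slightly weaker Gaussian decay; the resulting effective constant in front of $B^{2j}(x_{jq_1}-x_{jq_2})^{2}$ in the exponent is smaller than the nominal value $1$ appearing in the statement, but this causes no loss, since only the Gaussian decay in $B^j|x_{jq_1}-x_{jq_2}|$ is actually exploited downstream and the remaining numerical constants are absorbed into $C_\xi$. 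A minor additional check is that the $\eta^{-1}$ in the prefactor $(M_0\eta\widetilde{c_2})^{-1}$ is compensated by the implicit $\eta$-scaling in the needlet amplitude $\sqrt{\lambda_{jq}}$ from the definition (\ref{needletdef}).
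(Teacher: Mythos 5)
Your proposal is correct and its skeleton coincides with the paper's: both reduce $\mathbb{E}\bigl[\widetilde{\beta}_{jq_{1};s}^{(R_{t})}\widetilde{\beta}_{jq_{2};s}^{(R_{t})}\bigr]$ via the Wiener--It\^{o} isometry to $\frac{1}{\sigma_{jq_{1};s}\sigma_{jq_{2};s}}\int_{\mathbb{S}^{1}}\psi_{jq_{1};s}\psi_{jq_{2};s}F$, bound $F$ by $M_{\infty}$ and $\sigma_{jq;s}^{2}$ from below via (\ref{sigmabound}) (with the $\eta^{-1}$ cancelled by the $\sqrt{\lambda_{jq}}$ amplitudes, exactly as you note), and then feed the localization property into the resulting deterministic integral. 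Where you genuinely diverge is in handling the product of the two off-centre Gaussians: the paper follows \cite{dmp} and splits $\mathbb{S}^{1}$ into the two regions $S_{1},S_{2}$ on which $(\theta-x_{jq_{i}})^{2}>(x_{jq_{1}}-x_{jq_{2}})^{2}/2$, extracting the separation decay from one factor and integrating the other, whereas you complete the square through the midpoint identity $(\theta-x_{jq_{1}})^{2}+(\theta-x_{jq_{2}})^{2}=2(\theta-\bar{x})^{2}+\tfrac{1}{2}(x_{jq_{1}}-x_{jq_{2}})^{2}$ and integrate in $u=B^{j}(\theta-\bar{x})$. Your route avoids the case analysis and makes the constant in the exponent explicit; the paper's route generalizes more directly to settings where no exact parallelogram identity is available. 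Two remarks. First, the loss in the exponential constant that you flag (you obtain $\exp(-cB^{2j}(x_{jq_{1}}-x_{jq_{2}})^{2})$ with $c<1$ after absorbing the degree-$4s$ polynomial) is not a defect of your argument relative to the paper: the paper's own proof extracts the separation factor from the condition $(\theta-x_{jq_{1}})^{2}>(x_{jq_{1}}-x_{jq_{2}})^{2}/2$ and likewise only delivers a constant strictly less than the nominal $1$ in the statement of Lemma \ref{lemmacovariance}; in both cases the precise constant is immaterial for Theorem \ref{theorempeccati}, which only uses some exponential decay in $B^{j}\left\vert x_{jq_{1}}-x_{jq_{2}}\right\vert$. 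Second, both you and the paper silently read the left-hand side $\left\vert\mathbb{E}\left[\left\vert\widetilde{\beta}_{jq_{1};s}^{(R_{t})}\widetilde{\beta}_{jq_{2};s}^{(R_{t})}\right\vert\right]\right\vert$ as the covariance $\left\vert\mathbb{E}\left[\widetilde{\beta}_{jq_{1};s}^{(R_{t})}\widetilde{\beta}_{jq_{2};s}^{(R_{t})}\right]\right\vert$, which is what the isometry computes and what the application requires; since you then dominate everything by $\left\langle\left\vert\psi_{jq_{1};s}\right\vert,\left\vert\psi_{jq_{2};s}\right\vert\right\rangle_{L^{2}(\mathbb{S}^{1})}$, your bound in fact covers the literal statement as well.
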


\begin{proof}
We have that%
\begin{eqnarray*}
\left\vert \mathbb{E}\left[ \left\vert \widetilde{\beta }_{jq_{1};s}^{\left(
R_{t}\right) }\widetilde{\beta }_{jq_{2};s}^{\left( R_{t}\right)
}\right\vert \right] \right\vert &=&\left\vert \frac{1}{R_{t}\sigma
_{jq_{1};s}\sigma _{jq_{2};s}}\int_{\mathbb{S}^{1}}\psi _{jq_{1};s}\left(
\theta \right) \psi _{jq_{2};s}\left( \theta \right) \mu _{t}\left( d\theta
\right) \right\vert \\
&=&\frac{1}{\sigma _{jq_{1},s}\sigma _{jq_{2},s}}\left\vert \int_{\mathbb{S}%
^{1}}\psi _{jq_{1};s}\left( \theta \right) \psi _{jq_{2};s}\left( \theta
\right) F\left( \theta \right) d\theta \right\vert \\
&\leq &\frac{M_{\infty }}{\eta M_{0}\widetilde{c_{2}}}\left\langle
\left\vert \psi _{jq_{1};s}\right\vert ,\left\vert \psi
_{jq_{2};s}\right\vert \right\rangle _{L^{2}\left( \mathbb{S}^{1}\right) }%
\text{ .}
\end{eqnarray*}%
Analogously to \cite{dmp}, we split $\mathbb{S}^{1}$ into two regions:%
\begin{eqnarray*}
S_{1} &=&\left\{ \theta \in \mathbb{S}^{1}:\left( \theta -x_{jq_{1}}\right)
^{2}>\left( x_{jq_{1}}-x_{jq_{2}}\right) ^{2}/2\right\} \\
S_{2} &=&\left\{ \theta \in \mathbb{S}^{1}:\left( \theta -x_{jq_{2}}\right)
^{2}>\left( x_{jq_{1}}-x_{jq_{2}}\right) ^{2}/2\right\} \text{ .}
\end{eqnarray*}%
so that. 
\begin{equation*}
\left\langle \left\vert \psi _{jq_{1};s}\right\vert ,\left\vert \psi
_{jq_{2};s}\right\vert \right\rangle _{L^{2}\left( \mathbb{S}^{1}\right)
}\leq \int_{S_{1}}\left\vert \psi _{jq_{1};s}\left( \theta \right) \psi
_{jq_{2};s}\left( \theta \right) \right\vert d\theta +\int_{S_{2}}\left\vert
\psi _{jq_{1};s}\left( \theta \right) \psi _{jq_{2};s}\left( \theta \right)
\right\vert d\theta
\end{equation*}%
From the localization property, it follows that 
\begin{equation*}
\int_{S_{1}}\left\vert \psi _{jq_{1};s}\left( \theta \right) \psi
_{jq_{2};s}\left( \theta \right) \right\vert d\theta
\end{equation*}%
\begin{eqnarray*}
&\leq &c_{s}^{2}\eta \int_{S_{1}}\exp \left( -B^{2j}\left( \left( \theta
-x_{jq_{1}}\right) ^{2}+\left( \theta -x_{jq_{2}}\right) ^{2}\right) \right)
\\
&&\times \left( 1+\left( \frac{B^{2j}\left( \theta -x_{jq_{1}}\right) ^{2}}{2%
}\right) ^{s}\right) \left( 1+\left( \frac{B^{2j}\left( \theta
-x_{jq_{2}}\right) ^{2}}{2}\right) ^{s}\right) B^{j}d\theta
\end{eqnarray*}%
\begin{eqnarray*}
&\leq &c_{s}^{2}\eta \exp \left( -B^{2j}\left( x_{jq_{1}}-x_{jq_{2}}\right)
^{2}\right) \int_{S_{1}}\exp \left( -B^{2j}\left( \theta -x_{jq_{2}}\right)
^{2}\right) \\
&&\times \left( 1+\left( \frac{B^{2j}\left( \theta -x_{jq_{1}}\right) ^{2}}{2%
}\right) ^{s}\right) \left( 1+\left( \frac{B^{2j}\left( \theta
-x_{jq_{1}}+x_{jq_{1}}-x_{jq_{2}}\right) ^{2}}{2}\right) ^{s}\right)
B^{j}d\theta
\end{eqnarray*}%
\begin{eqnarray*}
&\leq &\left( c_{s}^{\prime }\right) ^{2}\eta \exp \left( -B^{2j}\left(
x_{jq_{1}}-x_{jq_{2}}\right) ^{2}\right) \left( 1+B^{2j}\left(
x_{jq_{1}}-x_{jq_{2}}\right) ^{2s}\right) \\
&&\int_{S_{1}}\exp \left( -B^{2j}\left( \theta -x_{jq_{2}}\right)
^{2}\right) \left( 1+\left( B^{2j}\left( \theta -x_{jq_{1}}\right)
^{2}\right) ^{s}\right) ^{2}B^{j}d\theta \text{ .}
\end{eqnarray*}%
It is immediate to see%
\begin{equation*}
\int_{S_{1}}\exp \left( -B^{2j}\left( \theta -x_{jq_{2}}\right) ^{2}\right)
\left( 1+\left( B^{2j}\left( \theta -x_{jq_{1}}\right) ^{2}\right)
^{s}\right) ^{2}B^{j}d\theta
\end{equation*}%
\begin{equation*}
\leq \int_{0}^{\infty }\exp \left( -u^{2}\right) \left( 1+u^{2s}\right)
^{2}du
\end{equation*}%
\begin{equation*}
\leq \frac{1}{2}\left( \sqrt{\pi }+2\Gamma \left( s+\frac{1}{2}\right)
+2\Gamma \left( 2s+\frac{1}{2}\right) \right) \text{ ,}
\end{equation*}%
so that%
\begin{equation*}
\int_{S_{1}}\left\vert \psi _{jq_{1};s}\left( \theta \right) \psi
_{jq_{2};s}\left( \theta \right) \right\vert d\theta \leq \left(
c_{s}^{\prime \prime }\right) ^{2}\eta \exp \left( -B^{2j}\left(
x_{jq_{1}}-x_{jq_{2}}\right) ^{2}\right) \left( 1+B^{2sj}\left(
x_{jq_{1}}-x_{jq_{2}}\right) ^{2s}\right) \text{ .}
\end{equation*}%
This concludes the proof.
\end{proof}

\begin{remark}
\label{CIRO}For $j$ sufficiently large and for $\theta \neq x_{jq}$, there
exist $\tau >s$ and $C_{\tau }>0$ such that%
\begin{equation*}
\frac{\left( 1+\left( \frac{B^{j}\left( \theta -x_{jq}\right) }{2}\right)
^{2s}\right) }{\exp \left( -\left( \frac{B^{j}\left( \theta -x_{jq}\right) }{%
2}\right) ^{2}\right) }\leq C_{\tau }\left( 1+\left( \frac{B^{j}\left(
\theta -x_{jq}\right) }{2}\right) \right) ^{-\tau }
\end{equation*}
\end{remark}

\begin{lemma}
\label{lemmapeccati2}Let $\psi _{jq;s}\left( \cdot \right) $ be given by (%
\ref{needletdef}). For $j\geq 1$, there exists $C>0$ such that%
\begin{equation*}
\sum_{q_{1},q_{2},q_{3}=1}^{d}\int_{\mathbb{S}^{1}}\left\vert \psi
_{jq_{1};s}\left( \theta \right) \right\vert \left\vert \psi
_{jq_{2};s}\left( \theta \right) \right\vert \left\vert \psi
_{jq_{3};s}\left( \theta \right) \right\vert d\theta \leq Cd\eta ^{\frac{3}{2%
}}B^{\frac{j}{2}}\text{ .}
\end{equation*}
\end{lemma}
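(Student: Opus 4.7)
The plan is to rewrite the triple sum as the $L^3$-integral of a single nonnegative sum, namely
$$\sum_{q_{1},q_{2},q_{3}=1}^{d}\int_{\mathbb{S}^{1}}|\psi _{jq_{1};s}(\theta )||\psi _{jq_{2};s}(\theta )||\psi _{jq_{3};s}(\theta )|d\theta =\int_{\mathbb{S}^{1}}\Bigl(\sum_{q=1}^{d}|\psi _{jq;s}(\theta )|\Bigr)^{3}d\theta,$$
and then to control two of the three factors by a pointwise (sup-norm) estimate while integrating the third against $d\theta$. Concretely, I would first establish the pointwise bound
$$\sup_{\theta \in \mathbb{S}^{1}}\sum_{q=1}^{Q_{j}}|\psi _{jq;s}(\theta )|\leq K_{\eta,s} B^{j/2},$$
via the Mexican needlet localization property: each summand is bounded by $c_{s}B^{j/2}e^{-(B^{j}(\theta -x_{jq})/2)^{2}}(1+(B^{j}(\theta -x_{jq})/2)^{2s})$, and since the pixel centres $\{x_{jq}\}$ are quasi-uniformly spaced at distance $\approx \eta B^{-j}$, a Riemann-sum comparison against the finite integral $\int_{\mathbb{R}}e^{-u^{2}/4}(1+(u/2)^{2s})du$ yields the claimed bound up to a constant depending on $\eta$ and $s$.

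Second, I would split the cubed sum using this pointwise estimate in two of the factors, trivially extending the range of summation from $d$ to $Q_{j}$ if convenient:
$$\int_{\mathbb{S}^{1}}\Bigl(\sum_{q=1}^{d}|\psi _{jq;s}(\theta )|\Bigr)^{3}d\theta \leq \Bigl(\sup_{\theta}\sum_{q=1}^{Q_{j}}|\psi _{jq;s}(\theta )|\Bigr)^{2}\int_{\mathbb{S}^{1}}\sum_{q=1}^{d}|\psi _{jq;s}(\theta )|d\theta.$$
The remaining integral is the sum of $L^{1}$-norms, which is controlled through the $L^{p}$ boundedness rule (\ref{normbound}) applied with $p=1$:
$$\sum_{q=1}^{d}\|\psi _{jq;s}\|_{L^{1}(\mathbb{S}^{1})}\leq d\,\widetilde{C_{1}}\eta ^{1/2}B^{-j/2}.$$
Multiplying yields $(K_{\eta,s}B^{j/2})^{2}\cdot d\,\widetilde{C_{1}}\eta^{1/2}B^{-j/2}=K'\,d\,\eta^{-3/2}B^{j/2}\eta^{2}$; absorbing all $\eta$-dependent factors into the final constant $C$ (since $\eta$ is fixed throughout) produces the asserted bound $Cd\eta^{3/2}B^{j/2}$.

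The main obstacle is the pointwise estimate in the first step, which is the only place where one genuinely exploits the geometric structure of the partition $\{E_{jq}\}$. The polynomial correction $(1+(B^{j}(\theta-x_{jq})/2)^{2s})$ attached to the Gaussian factor must be absorbed into the exponential decay to ensure the resulting sum is finite, and the quasi-uniform spacing $\lambda_{jq}\approx \eta B^{-j}$ (rather than an exact grid) requires a small but careful comparison of the discrete sum with its continuous analogue. Once that is in hand, the remaining steps are routine Hölder-style manipulations combined with the already established $L^{p}$-norm bounds of the Mexican needlets.
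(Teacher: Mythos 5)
Your proposal is correct and follows essentially the same route as the paper: rewrite the triple sum as $\int_{\mathbb{S}^{1}}(\sum_{q}|\psi_{jq;s}|)^{3}d\theta$, prove the pointwise bound $\sum_{q}|\psi_{jq;s}(\theta)|\lesssim \eta^{1/2}B^{j/2}$ by comparing the localized sum over the quasi-uniform grid with a convergent integral, then use that bound on two factors and the $L^{1}$-norm estimate from (\ref{normbound}) on the third. The only cosmetic difference is that the paper passes through Remark \ref{CIRO} to trade the Gaussian--polynomial factor for pure polynomial decay before the discrete-to-continuous comparison, whereas you compare directly against $\int e^{-u^{2}/4}(1+(u/2)^{2s})\,du$; both yield the same conclusion.
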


\begin{proof}
Let $\Im \left( \theta _{0},r\right) $ the arc centered on $\theta _{0}$ of
length $r$. Hence, for any $\theta \in \Im \left( x_{jm},B^{-j}\right) $%
\begin{equation*}
\sum_{q_{1},q_{2},q_{3}=1}^{d}\int_{\mathbb{S}^{1}}\left\vert \psi
_{jq_{1};s}\left( \theta \right) \right\vert \left\vert \psi
_{jq_{2};s}\left( \theta \right) \right\vert \left\vert \psi
_{jq_{3};s}\left( \theta \right) \right\vert d\theta
\end{equation*}%
\begin{equation*}
\leq \sum_{m}\int_{\Im \left( x_{j^{\prime }m},B^{-j}\right) }\left(
\sum_{q=1}^{d}\left\vert \psi _{jq;s}\left( \theta \right) \right\vert
\right) ^{3}d\theta \text{ .}
\end{equation*}%
Observe that%
\begin{equation*}
\sum_{q=1}^{d}\left\vert \psi _{jq;s}\left( \theta \right) \right\vert
\end{equation*}%
\begin{eqnarray*}
&\leq &c_{s}\eta ^{\frac{1}{2}}B^{\frac{j}{2}}\sum_{q=1}^{d}\exp \left(
-\left( \frac{B^{j}\left( \theta -x_{jq}\right) }{2}\right) ^{2}\right)
\left( 1+\left( \frac{B^{j}\left( \theta -x_{jq}\right) }{2}\right)
^{2s}\right) \\
&\leq &c_{s}\eta ^{\frac{1}{2}}B^{\frac{j}{2}}+c_{s}\eta B^{\frac{j}{2}%
}\sum_{q:=x_{jk}\notin \Im \left( x_{jm},B^{-j}\right) }^{d}\exp \left(
-\left( \frac{B^{j}\left( \theta -x_{jq}\right) }{2}\right) ^{2}\right) \\
&&\times \left( 1+\left( \frac{B^{j}\left( \theta -x_{jq}\right) }{2}\right)
^{2s}\right) \text{ .}
\end{eqnarray*}%
Now, in view of Remark \ref{CIRO}, there exists $\tau >s$ such that 
\begin{equation*}
\sum_{q:=x_{jq}\notin \Im \left( x_{jm},B^{-j}\right) }^{d}B^{\frac{j}{2}%
}\exp \left( -\left( \frac{B^{j}\left( \theta -x_{jq}\right) }{2}\right)
^{2}\right) \left( 1+\left( \frac{B^{j}\left( \theta -x_{jq}\right) }{2}%
\right) ^{2s}\right)
\end{equation*}%
\begin{equation*}
\leq c_{\tau }\sum_{q:=x_{jq}\notin \Im \left( x_{jm},B^{-j}\right) }^{d}B^{%
\frac{j}{2}}\left( 1+\left( B^{2j}\left( \theta -x_{jq}\right) ^{2}\right)
\right) ^{-\tau }\text{ .}
\end{equation*}%
As in \cite{dmp}, by using triangle inequality, for $x_{jq}\notin \Im \left(
x_{jm},B^{-j}\right) ,$ $\theta \in \Im \left( x_{jq},B^{-j}\right) $%
\begin{equation*}
\left( x_{jq}-x_{jm}\right) ^{2}+\left( \theta -x_{jq}\right) ^{2}\geq
\left( \theta -x_{jm}\right) ^{2}\text{ ,}
\end{equation*}%
we have%
\begin{equation*}
\sum_{q:=x_{jq}\notin \Im \left( x_{jm},B^{-j}\right) }B^{\frac{j}{2}}\left(
1+\left( B^{2j}\left( \theta -x_{jq}\right) ^{2}\right) \right) ^{-\tau }
\end{equation*}%
\begin{eqnarray*}
&\leq &\sum_{q:=x_{jq}\notin \Im \left( x_{jm},B^{-j}\right) }B^{\frac{j}{2}%
}\left( B^{2j}\left( x_{jm}-x_{jq}\right) ^{2}\right) ^{-\tau } \\
&\leq &\sum_{q:=x_{jq}\notin \Im \left( x_{jm},B^{-j}\right) }\frac{c_{\tau
}^{\prime }}{\rho \left( \Im \left( x_{jm},B^{-j}\right) \right) }\int_{\Im
\left( x_{jm},B^{-j}\right) }B^{\frac{j}{2}}\left( B^{2j}\left(
x_{jm}-x_{jq}\right) ^{2}\right) ^{-\tau }dx \\
&\leq &\sum_{q:=x_{jq}\notin \Im \left( x_{jm},B^{-j}\right) }\frac{c_{\tau
}^{\prime }}{\rho \left( \Im \left( x_{jm},B^{-j}\right) \right) }\int_{\Im
\left( x_{jm},B^{-j}\right) }2^{\tau }B^{\frac{j}{2}}\left( B^{2j}\left(
x_{jm}-x\right) ^{2}\right) ^{-\tau }dx \\
&\leq &c_{\tau }^{\prime \prime }B^{\frac{j}{2}}\text{ ,}
\end{eqnarray*}%
as in \cite{dmp}, Theorem 5.5. Therefore we have 
\begin{equation}
\sum_{q=1}^{d}\left\vert \psi _{jq;s}\left( \theta \right) \right\vert \leq
C_{\tau }\eta ^{\frac{1}{2}}B^{\frac{j}{2}}\text{ ,}  \label{ciroeq1}
\end{equation}%
which leads to%
\begin{equation*}
\sum_{m}\int_{\Im \left( x_{jm},B^{-j^{\prime }}\right) }\left(
\sum_{q=1}^{d}\left\vert \psi _{jq;s}\left( \theta \right) \right\vert
\right) ^{3}d\theta \leq \widetilde{C_{\tau }}\eta ^{\frac{3}{2}}B^{\frac{3}{%
2}j}\text{ .}
\end{equation*}%
To complete the proof, we use (\ref{ciroeq1}) to have%
\begin{eqnarray*}
\int_{\mathbb{S}^{1}}\left( \sum_{q=1}^{d}\left\vert \psi _{jq;s}\left(
\theta \right) \right\vert \right) ^{3}d\theta &=&\sum_{q_{1}=1}^{d}\int_{%
\mathbb{S}^{1}}\left\vert \psi _{jq_{1};s}\left( \theta \right) \right\vert
\sum_{q_{2}=1}^{d}\left\vert \psi _{jq;s}\left( \theta \right) \right\vert
\sum_{q_{3}=1}^{d}\left\vert \psi _{jq_{3};s}\left( \theta \right)
\right\vert d\theta \\
&\leq &C_{\tau }^{2}\eta B^{j}\sum_{q_{1}=1}^{d}\int_{\mathbb{S}%
^{1}}\left\vert \psi _{jq_{1};s}\left( \theta \right) \right\vert d\theta \\
&\leq &C_{\tau }^{2}\eta B^{j}\sum_{q_{1}=1}^{d}\left\Vert \psi
_{jq_{1};s}\right\Vert _{L^{1}\left( \mathbb{S}^{1}\right) } \\
&\leq &C_{\tau }^{2}d\eta ^{\frac{3}{2}}B^{\frac{j}{2}}\text{ .}
\end{eqnarray*}
\end{proof}

\section{An application: nonparametric density estimation\label%
{secapplication}}

In this section, we will present a practical application in the framework of
nonparametric thresholding density estimation. The thresholding techniques,
introduced in the literature by \cite{donoho}, have become a successful
tool in statistics, used in many research fields, cfr. the textbooks \cite%
{WASA, tsyb}. The asymptotic result here established are related to random
vectors on the unit circle assuming the form (in the ``de-Poissonized'' case) 
\begin{equation*}
\widehat{\beta }_{jq;s}^{\left( n\right) }=\frac{1}{n}\sum_{i=1}^{n}\psi
_{jq;s}\left( X_{i}\right) \text{ . }
\end{equation*}

Consider now a set of random circular observations $\left\{ X_{i}\in \mathbb{%
S}^{1}:i=1,...,n\right\} $ with common distribution $v\left( \theta \right)
=F\left( \theta \right) d\theta $. Let us introduce the threshold $\zeta
_{jq}\left( \tau _{n}\right) :=\mathds{1}_{\left\{ \left\vert \beta
_{jq;s}\right\vert \geq \kappa \tau _{n}\right\} }$ , where $\kappa $ is a
real-valued positive constant to be chosen to set the size of the threshold
(cfr. \cite{bkmpAoSb, durastanti3}): the thresholding density estimator is
given by 
\begin{equation}
\widehat{F}\left( \theta \right)
=\sum_{j=J_{0}}^{J_{n}}\sum_{q=1}^{Q_{j}}\zeta _{jq}\left( \tau _{n}\right) 
\widehat{\beta }_{jq;s}\psi _{jq;s}\left( \theta \right) \text{ , }\theta
\in \mathbb{S}^{1}\text{ . }  \label{estimatordef}
\end{equation}%
where $\tau _{n}=\sqrt{\frac{\log n}{n}}$, as usual in the literature (see
for instance \cite{bkmpAoSb, dgm, durastanti3, WASA}). Further details on
this topic can be found in \cite{durastanti3}. Finite-sample approximations
on the distribution of the coefficients $\widehat{\beta }_{jq;s}$ can be
useful to fix an optimal value of the thresholding constant $\kappa $, by
using a plug-in procedure built as follows

\begin{enumerate}
\item Fixed \ a resolution level $j^{\ast }$, the finite-sample
approximations on the distributions of the coefficients $\widehat{\beta }%
_{j^{\ast }q;s}$ can be establish explicitly their corresponding expected
values and variances.

\item Using those informations, an optimal threshold $\kappa \tau _{n}$ can
be built.

\item Study the nonparametric density estimator $\widehat{F}$ with the
optimal threshold.
\end{enumerate}

More in details, $\tau _{n}$ depends on $R_{t}$, while $\kappa $ can be
built on the value of sample expected values and variances. In particular,
observe that for the cut-off frequency $J_{n}\equiv J_{R_{t}}$, usually
chosen such that $B^{J_{R_{t}}}=\sqrt{R_{t}/\log \left( R_{t}\right) }$, we
have that 
\begin{equation*}
d_{2}\left( \widetilde{\beta }_{jq;s}^{\left( R_{t}\right) },Z\right) \leq
O\left( \left( \log \left( R_{t}\right) \right) ^{-\frac{1}{2}}\right) 
\underset{t\rightarrow \infty }{\longrightarrow }0\text{ .}
\end{equation*}

\section{Numerical results\label{secsimulation}}

\begin{figure}[tbp]
\centering
\includegraphics[width=\textwidth]{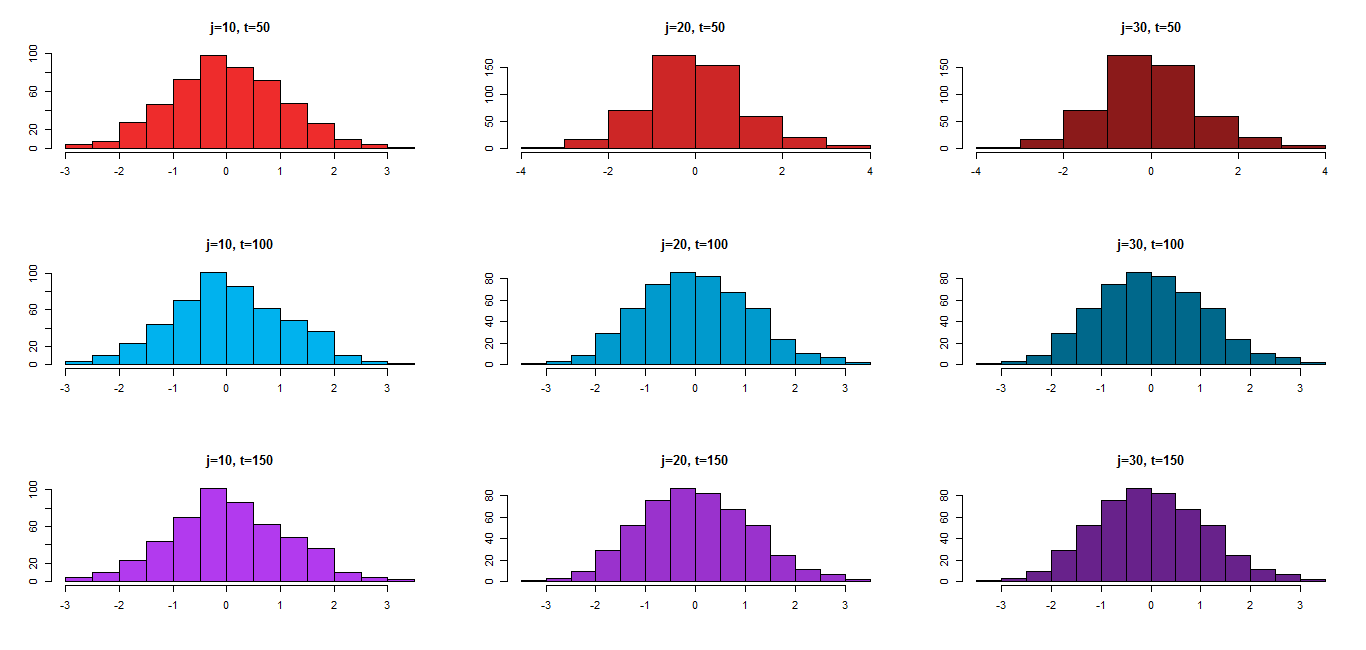}
\caption{histograms of estimates of Mexican needlet coefficients
corresponding to various choices of $j$ and $t$.}
\label{fig:3}
\end{figure}

In this section we will present some numerical evidences obtained by
simulations on CRAN\ R-code. Observe that these results, obtained in a
finite sample situation, can be considered just as a qualitative check of
the main theoretical achievements here proposed. We develop a procedure to
build Mexican coefficients, in the univariate case, according to the
following guidelines:

\begin{enumerate}
\item the distribution on $\mathbb{S}^{1}$ is uniform, such that for any $%
j,q $ 
\begin{eqnarray*}
b_{jq;s}&=& 0 ,  \notag \\
\sigma^2_{jq;s}&=& \frac{\Gamma\left(2s+\frac{1}{2}\right)}{2^{\left(2s+%
\frac{1}{2}\right)}} .  \notag
\end{eqnarray*}

\item the intensity of the Poisson process is given by $R_{t}=R\cdot t$,
with $R=10$ and $t=50,100,150$.

\item the needlets taken into account corresponds to the resolution levels: $%
j=10,20,30$, while we fixed $B=1.3$, $x_{jq}=\pi $ and $s=3$.
\end{enumerate}

\begin{table}[tbp]
\par
\begin{center}\label{tab:1}
\begin{tabular}{cccc}
\hline\hline
\multicolumn{4}{c}{Shapiro-Wilk test} \\ 
&  & $W$ & $p$-value \\ \hline\hline
& $j=10$ & 0.9962 & 0.28 \\ 
$t=50$ & $j=20$ & 0.9962 & 0.28 \\ 
& $j=30$ & 0.9960 & 0.29 \\ \hline
& $j=10$ & 0.9971 & 0.54 \\ 
$t=100$ & $j=20$ & 0.9970 & 0.53 \\ 
& $j=30$ & 0.9968 & 0.52 \\ \hline
& $j=10$ & 0.9981 & 0.85 \\ 
$t=150$ & $j=20$ & 0.9983 & 0.84 \\ 
& $j=30$ & 0.9984 & 0.86 \\ \hline\hline
\end{tabular}%
\end{center}
\caption{Result of the Shapiro-Wilk test.}
\end{table}

In Figure \ref{fig:3}, each histogram describes the normalized Mexican
coefficients built after the iteration of $N_{max}=500$ simulations,
combining $j=10,20,30$ and $t=50,100,150$. Observe that they attain fastly
the Gaussianity, as confirmed in Table 1 by the results of the
Shapiro-Wilk test. Indeed, the test statistic $W$ is closer
to $1$ for growing $t$ and it is slowing decreasing as $j$ increases.
Furthermore, $p$-values increase strongly with $t$. As a counterexample,
in Figure \ref{fig:4} we describe the distribution corresponding to the case 
$t=5,j=40$, on which the Gaussianity does not seem to be attained, as
confirmed by the Shapiro-Wilk which gives as result $W=0.9719,$ $p$-value $%
=3.207\cdot 10^{-8}$.\\

\begin{figure}[tbp]
\centering
\includegraphics[width=\textwidth]{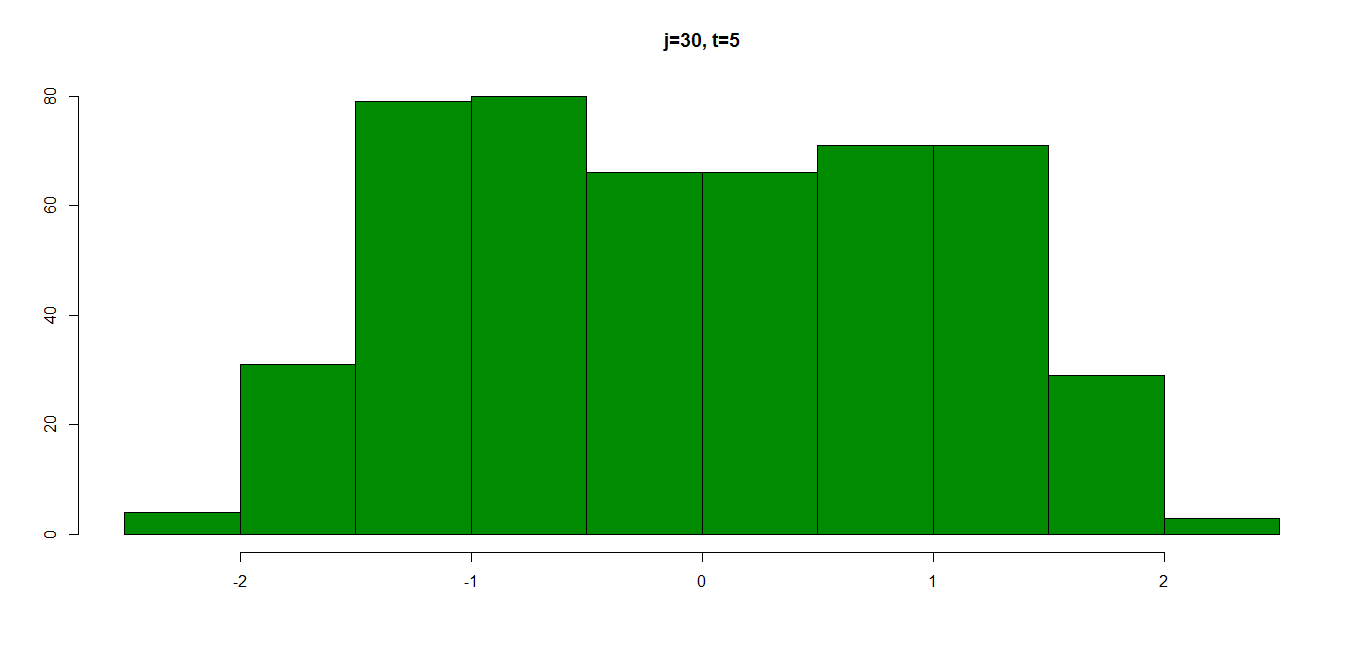}
\caption{histogram of estimates of Mexican needlet coefficients
corresponding to $j=30$ and $t=5$.}
\label{fig:4}
\end{figure}

\noindent \textbf{Acknowledgements} - The author whishes to thank D. Marinucci for the useful discussions and E. Calfa for the accurate reading.

\end{document}